\numberwithin{equation}{section}
\newcommand{\R}{{\mathbb R}}
\newcommand{\Z}{{\mathbb Z}}
\newcommand{\N}{{\mathbb N}}
\newcommand{\C}{{\mathbb C}}
\newcommand{\be}{\begin{eqnarray}}
\newcommand{\ben}{\begin{eqnarray*}}
\newcommand{\en}{\end{eqnarray}}
\newcommand{\enn}{\end{eqnarray*}}
\newcommand{\pa}{\partial}
\newcommand{\curl}{{\rm curl\,}}
\newcommand{\divv}{{\rm div\,}}
\newcommand{\G}{\Gamma}
\newtheorem{theorem}{Theorem}[section]
\newtheorem{lemma}[theorem]{Lemma}
\newtheorem{corollary}[theorem]{Corollary}
\newtheorem{definition}[theorem]{Definition}
\newtheorem{remark}[theorem]{Remark}
\newtheorem{assumption}{Assumption}[section]
\definecolor{rot}{rgb}{1.000,0.000,0.000}
\definecolor{rot1}{rgb}{0.000,0.000,0.000}
\begin{document}
\renewcommand{\theequation}{\arabic{section}.\arabic{equation}}
\begin{titlepage}
\title{On the generalized Calder\'on formulas for closed- and open-surface elastic scattering problems}

\author{Liwei Xu\thanks{School of Mathematical Sciences, University of Electronic Science and Technology of China, Chengdu, Sichuan 611731, China. Email: {\tt xul@uestc.edu.cn}}\;,
Tao Yin\thanks{LSEC, Institute of Computational Mathematics and Scientific/Engineering Computing, Academy of Mathematics and Systems Science, Chinese Academy of Sciences, Beijing 100190, China, and School of Mathematical Science, University of Chinese Academy of Sciences, Beijing 100049, China. Email:{\tt yintao@lsec.cc.ac.cn}}}
\end{titlepage}
\maketitle

\begin{abstract}
The Calder\'on formulas (i.e., the combination of single-layer and hyper-singular boundary integral operators) have been widely utilized in the process of constructing valid boundary integral equation systems which could possess highly favorable spectral properties. This work is devoted to studying the theoretical properties of elastodynamic Calder\'on formulas which provide us with a solid basis for the design of fast boundary integral equation methods solving elastic wave problems defined on a close-surface or an open-surface in two dimensions. For the closed-surface case, it is proved that the Calder\'on formula is a Fredholm operator of second-kind except for certain circumstances. Regarding to the open-surface case, we investigate weighted integral operators instead of the original integral operators which are resulted from dealing with edge singularities of potentials corresponding to the elastic scattering problems by open-surfaces, and show that the Calder\'on formula is a compact perturbation of a bounded and invertible operator. To complete the proof, we need to use the well-posedness result of the elastic scattering problem,  the analysis of the zero-frequency integral operators defined on the straight arc, the singularity decompositions of the kernels of integral operators, and a new representation formula of the hyper-singular operator. Moreover, it can be demonstrated that the accumulation point of the spectrum of the invertible operator is the same as that of the eigenvalues of the Calder\'on formula in the closed-surface case.
\end{abstract}

\section{Introduction}
\label{sec:1}

As one of the most fundamental numerical methods, the boundary integral equation (BIE) method~\cite{HW08} has been extensively developed for numerical solutions of partial differential equations problems with various structures including bounded closed-surface~\cite{BK01,BXY17,BY20}, open screens~\cite{BL12,BL13,BXY21,LB15,WS90}, period or non-period infinite surface~\cite{BY21}, and so on. The BIE method has a feature of discretization of domains with lower dimensionality, and it is also a feasible method for the numerics of high frequency scattering problems. For large-scale problems with high-frequencies or three-dimensional complicated geometries, such iterative algorithms~\cite{DL15}  as the Krylov-subspace linear algebra solver GMRES, together with adequate acceleration techniques~\cite{BB21,BK01,L09},  are generally required for fast solving the resulting linear system whose coefficient matrix is dense. The efficiency of the GMRES iteration is highly related to the spectral features of the coefficient matrix of the linear system~\cite{TB97} and therefore, appropriate preconditioning, such as the analytical preconditioning based on the  Calder\'on formulas~\cite{BET12,CDL20} and the algebraic preconditioning strategies~\cite{BT98}, are usually  employed. However, only a few theoretical properties of the Calder\'on formulas (also called the Calder\'on relation in this work) for acoustic/electromagnetic closed-surface problems~\cite{BET12,CDL20}, two-dimensional acoustic open-surface problems~\cite{BL12,LB15} and elastic closed-surface problems with the standard traction operator~\cite{BXY21,BY21}, have been studied in  open literatures. We also refer to \cite{HJU18,HJU20,HU201,HU202} and the references therein for the study of inverses of integral operators on disks and the corresponding preconditioning associated with boundary element Galerkin discretizations.

This work is devoted to studying the theoretical properties of the Calder\'on formulas related to the two-dimensional problems of elastic scattering by closed- or open-surfaces which have many significant applications in science and engineering~\cite{PG18,W06}, including geophysics, non-destructive testing of solids materials, mining and energy production, etc. A fundamental purpose of utilizing the Calder\'on formulas is to construct BIEs, for example, the second-kind Fredholm integral equations,  with the highly favorable spectral properties that the eigenvalues of the BIEs are bounded away from zero and infinity. One can refer to the methodologies discussed in~\cite{BET12} for the acoustic case and those in \cite{CN02} for the electromagnetic case. Although for the acoustic and elastodynamic problems, the Calder\'on formulas are indeed the composition of the single-layer integral operator $S$ and the hyper-singular integral operator $N$, the extension of the theoretical analysis on the Calder\'on formulas in acoustics to that on the elastodynamic cases, however, encounters additional challenges. More precisely, for the smooth closed-surface case, the acoustic Calder\'on formula  reads $NS=-I/4+(D^*)^2$ where $D^*$ represents the transpose of the double-layer boundary integral operator, and is compact in appropriate Sobolev spaces. This fact ensures that the acoustic Calder\'on formula is of the second-kind Fredholm type. However, the corresponding operator $D^*$ in the elastic case is not compact, see for example~\cite{AJKKY,AKM}. In addition, the highly singular character of the associated integral kernel in elastodynamic hyper-singular operator is much more complicated than that in the acoustic case.

For the closed-surface case, by applying the polynomial compactness of the statistic elastic Neumann-Poincar\'e double-layer operator $D_0$ and its transpose $D_0^*$, it has been shown in~\cite{BXY21,BY20} that the elastodynamic Calder\'on formula involving the standard traction operator (\ref{stress}) is exactly a second-kind Fredholm operator (see~\cite{ZXY21} for two dimensional poroelastic case) whose eigenvalues are bounded away from zero and infinity with accumulation points being dependent on the elastic Lam\'e parameters. In addition, on the basis of another special choice of the traction operator (see Lemma~\ref{specialcompact}(ii)), the corresponding elastodynamic Calder\'on formulas are Fredholm operators of second-kind in both two and three dimensions as well. In this paper, a generalized  traction operator~\cite{H98,DL15} related to the generalized Betti's formula~\cite{BHSY,KGBB79} will be considered,  and the general results to be presented in  Theorem~\ref{main1}  indicate that the generalized elastic Calder\'on formula is a Fredholm operator of second-kind except for the above two special forms of traction operator.

Unfortunately, the properties of the closed-surface Calder\'on formula become invalid in the open-surface case. As being verified in~\cite{LB15,PS60}, the two-dimensional acoustic composite operator $NS$, which is not a second-kind Fredholm operator, takes a local singularity like $1/d$ where $d$ denotes the distance between the node being considered and the nearby endpoint of the open-arc. The composite operator $NS$ in the elastic case suffers from analogous character and therefore, it can not be treated in order to obtain favorable features in the classical Sobolev spaces. Instead of discussing $NS$, in light of the singular character of the solutions of the single-layer and the hyper-singular BIEs for solving the corresponding  acoustic open-surface scattering problems~\cite{CDD03}, a novel acoustic version weighted Calder\'on formula $N^{\rm w}S^{\rm w}$ is proposed in~\cite{BL12,BL13}, and it can be written into a sum of an invertible operator and a compact operator~\cite{BL12,LB15}. According to what we have known, the similar theoretical analysis for elastic problems, including  acoustic and electromagnetic problems in three dimensions, still remains unavailable in existing literatures. As being numerically demonstrated in~\cite{BXY21,BY20} for elastic open-surface scattering problems, the elastic version $N^{\rm w}S^{\rm w}$ formula leads to a significant decreasing on the GMRES iterations compared to the un-preconditioned one for a given residual tolerance. As a significant complement to the above numerical observation, we present in the current work  a rigorous theory on the two-dimensional elastic Calder\'on formula $N^{\rm w}S^{\rm w}$ which actually can be viewed as a Fredholm integral operator of second kind and a compact perturbation of a bounded and invertible operator. In addition, the accumulation point of the spectrum of the invertible operator is the same as that of the eigenvalues of the elastic Calder\'on formula in the two-dimensional closed-surface case, see Remark~\ref{equivalence}.

The remainder of this paper is organized as follows. Section~\ref{sec:2} introduces the generalized  traction operator together with the elastic single-layer and hyper-singular boundary integral operators. Section~\ref{sec:3} investigates the spectral properties of the elastic Calder\'on formula in the closed-surface case and presents some regularized formulations of the elastic hyper-singular boundary integral operators. The elastic Calder\'on formula in the open-surface case is studied in Section~\ref{sec:4}: instead of the original elastic  boundary integral operators, weighted single-layer and hyper-singular boundary integral operators under certain edge singularity circumstance of potentials are introduced in Section~\ref{sec:4.1}; in terms of the analysis results of the elastic Calder\'on formula on a special straight open-arc and the singularity decompositions of the integration kernels, the spectral properties of the generalized elastic Calder\'on formula in the universal open-surface case is analyzed in Section~\ref{sec:4.2} and ~\ref{sec:4.3}. A conclusion is finally given in Section~\ref{sec:5}.

\section{Preliminaries}
\label{sec:2}

Let $\Gamma$ be a smooth closed-surface or open-surface in $\R^2$. Denote by   $\lambda,\mu$  $(\mu>0,\lambda+\mu>0)$ the Lam\'e parameters and let $\rho>0$ be the mass density of a linear isotropic and homogeneous elastic medium. Denote by $\omega>0$ the frequency. For elastic problems, the standard traction operator $T(\pa,\nu)$ on the boundary is defined as
\be
\label{stress}
T(\pa,\nu)u:=2 \mu \, \partial_{\nu} u + \lambda \,
\nu \, \divv u+\mu \tau \curl u,\quad u=(u_1,u_2)^\top,
\en
in which $\nu=(\nu^1,\nu^2){^\top}$ is the unit outer normal to the boundary $\G$, $\tau=(-\nu_2,\nu_1)^\top$ is the corresponding tangential vector, $\partial_\nu:=\nu\cdot\nabla$ denotes the normal derivative and $\curl u=\pa_2u_1-\pa_1u_2$. To produce the generalized elastic Caldr\'on relations, we consider a modified traction operator~\cite{H98} defined as follows:
\be
\label{astress}
\widetilde{T}(\pa,\nu)u:=(\mu+\widetilde{\mu}) \, \partial_{\nu} u +
\widetilde{\lambda} \, \nu \, \divv u+\widetilde{\mu}\tau \curl u,
\en
where $\widetilde{\lambda}+\widetilde{\mu}=\lambda+\mu$. Obviously, $\widetilde{T}=T$ holds if $\widetilde{\lambda}=\lambda, \widetilde{\mu}=\mu$.

In this work, we are interested in the theoretical properties of the Calder\'on formulas, i.e., the composite operator $N_\omega S_\omega$, for elastic closed-surface and open-surface scattering problems, where $S_\omega, N_\omega$ denote the elastodynamic single-layer and hyper-singular boundary integral operators, respectively, in the form of
\be
\label{SBIO}
S_\omega[\phi](x)  = \int_\G \Pi_\omega(x,y)\phi(y)ds_y,\quad x\in\G,
\en
and
\be
\label{HBIO}
N_\omega[\psi](x) = \widetilde{T}(\pa_x,\nu_x)\int_\G
(\widetilde{T}(\pa_y,\nu_y)\Pi_\omega(x,y))^\top\psi(y)ds_y,\quad x\in\G.
\en
Here, we denote by $\Pi_\omega(x,y)$ the fundamental displacement tensor of the time-harmonic Navier equation in $\R^2$. That is
\be
\label{navier}
\Delta^*\Pi_\omega(\cdot,y)+\rho\omega^2\Pi_\omega(\cdot,y)=-\delta_y(\cdot)\mathbb{I} \quad\mbox{in}\quad\R^2,
\en
where $\Delta^{*}$ denotes the Lam\'e operator given by
\ben
\label{LameOper}
\Delta^* = \mu\,\mbox{div}\,\mbox{grad} + (\lambda + \mu)\,\mbox{grad}\, \mbox{div}\,,
\enn
and $\mathbb{I}$ is the $2\times2$ identity matrix. It is known that $\Pi_\omega(x,y)$ admits the form~\cite{KGBB79}
\ben
\Pi_\omega(x,y)=\frac{1}{\mu}G_{k_s}(x,y)\mathbb{I}+\frac{1}{\rho\omega^2}
\nabla_x\nabla_x^\top
\left[G_{k_s}(x,y)-G_{k_p}(x,y)\right],
\enn
where $G_{k_j}(x,y),j=p,s$ represents the fundamental solution of the Helmholtz equation in $\R^2$:
\be
\label{HelmholtzFS}
G_{k_j}(x,y) = \frac{i}{4}H_0^{(1)}(k_j|x-y|), \quad x\ne y
\en
with wave numbers $k_j,j=p,s$, $H_0^{(1)}(\cdot)$ being the Hankel function of the first kind of order zero, and $i=\sqrt{-1}$. The wave numbers $k_s=\omega/c_p, k_p=\omega/c_s$ are called the wave number of the compressional and shear waves, respectively, where
\ben
c_p=\sqrt{\mu/\rho}\quad
\mbox{and}\quad c_s=\sqrt{(\lambda+2\mu)/\rho}.
\enn

\begin{remark}
The modified traction operator $\widetilde{T}$ can be formulated alternatively as
\be
\label{Tform2}
\widetilde{T}(\pa,\nu)u(x)= (\lambda+\mu)\nu(\nabla \cdot u) +
\mu\pa_{\nu}u + \widetilde{\mu} \mathcal{M}(\pa,\nu)u,
\en
where the G\"unter derivative operator $\mathcal{M}(\pa,\nu)$ is given by
\ben
\mathcal{M}(\pa,\nu)u= \pa_{\nu}u -\nu(\nabla\cdot u)+\tau \curl u=A\pa_\tau u,\quad A=\begin{pmatrix}
  0 & -1 \\
  1 & 0
\end{pmatrix}(=-A^\top).
\enn
This modified traction operator is equivalent to another generalized form discussed in~\cite{DL15}
\ben
T_\alpha=T-\alpha \mathcal{M}.
\enn
It follows that $\widetilde{T}=T_\alpha$ when $\widetilde{\mu}=\mu-\alpha$.
\end{remark}

\section{Calder\'on relation: closed-surface}
\label{sec:3}

Let $\Gamma$ be a smooth closed boundary. As shown in~\cite{DL15}, the Calder\'on identities
\be
\label{ClosedCaldron}
S_\omega N_\omega =-\frac{1}{4}I+(D_\omega)^2,\quad N_\omega S_\omega=-\frac{1}{4}I+(D_\omega^*)^2
\en
hold where the single-layer boundary integral operator $S_\omega$ and hyper-singular boundary integral operator $N_\omega$ are defined in (\ref{SBIO}) and (\ref{HBIO}), respectively, and the operators $D_\omega, D_\omega^*$ defined as
\ben
D_\omega[\phi](x) &=& \int_\G
(\widetilde{T}(\pa_y,\nu_y)\Pi_\omega(x,y))^\top\phi(y)ds_y,\quad x\in\G, \\
D_\omega^*[\phi](x) &=& \widetilde{T}(\pa_x,\nu_x)\int_\G
\Pi_\omega(x,y)\phi(y)ds_y,\quad x\in\G,
\enn
are called the double-layer boundary integral operator and transpose of double-layer boundary integral operator, respectively. It is known~\cite{HW08} that $D_\omega:H^{1/2}(\Gamma)^2\rightarrow H^{1/2}(\Gamma)^2$ and $D_\omega^*:H^{-1/2}(\Gamma)^2\rightarrow H^{-1/2}(\Gamma)^2$ are linear bounded operators.

\begin{lemma}
\label{specialcompact}
(i). If $\widetilde{\mu}=\mu$, then $P_2(D_\omega):H^{1/2}(\Gamma)^2\rightarrow H^{1/2}(\Gamma)^2$ and $P_2(D_\omega^*): H^{-1/2}(\Gamma)^2\rightarrow H^{-1/2}(\Gamma)^2$ are compact. Here, $P_2(t)=t^2-(C_{\lambda,\mu})^2I$, $I$ is the identity operator and $C_{\lambda,\mu}$ is a constant given by
\be
\label{constC}
C_{\lambda,\mu}=\frac{\mu}{2(\lambda+2\mu)}<\frac{1}{2}.
\en
(ii). If $\widetilde{\mu}=\frac{\mu(\lambda+\mu)}{\lambda+3\mu}$, then the operators $D_\omega, D_\omega^*$ themselves are compact.
\end{lemma}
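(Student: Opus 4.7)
My plan is to reduce both statements to properties of the static double-layer operator $D_0$, obtained from the definition of $D_\omega$ by replacing the dynamic fundamental tensor $\Pi_\omega$ with the Kelvin tensor $\Pi_0$ (the fundamental solution of the Lam\'e operator $\Delta^{*}$). The common preliminary step is to show that $D_\omega-D_0$ is compact on $H^{1/2}(\Gamma)^2$ and $D_\omega^*-D_0^*$ is compact on $H^{-1/2}(\Gamma)^2$. Using the small-argument expansion $H_0^{(1)}(z)=\tfrac{2i}{\pi}\log(z/2)+(\text{entire function of }z^2)$ together with the representation of $\Pi_\omega$ as a combination of $G_{k_s}$ and of $\nabla_x\nabla_x^\top(G_{k_s}-G_{k_p})$, one checks that $\Pi_\omega-\Pi_0$ extends smoothly across the diagonal; applying $\widetilde{T}(\partial_y,\nu_y)$ preserves this smoothness, and the standard mapping results in \cite{HW08} combined with Rellich's compact embedding give the claimed compactness.

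For part (i), the choice $\widetilde{\mu}=\mu$ coincides with the standard traction $T$, so $D_0$ is the classical static elastic Neumann--Poincar\'e operator. I would then invoke the polynomial-compactness result of Ando--Kang--Miyanishi and co-workers (as cited in \cite{AJKKY,AKM}): on a smooth closed curve in $\R^2$, $D_0^2-(C_{\lambda,\mu})^2 I$ is compact on $H^{1/2}(\Gamma)^2$ with $C_{\lambda,\mu}$ as in (\ref{constC}). Writing $K:=D_\omega-D_0$ and expanding
\ben
P_2(D_\omega)=D_\omega^2-(C_{\lambda,\mu})^2 I=\bigl(D_0^2-(C_{\lambda,\mu})^2 I\bigr)+D_0 K+K D_0+K^2,
\enn
the last three summands are compact as products of a bounded operator with the compact $K$, so $P_2(D_\omega)$ is compact. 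The statement for $P_2(D_\omega^*)$ follows from the analogous expansion around $D_0^*$, whose polynomial compactness is equivalent to that of $D_0$ by Banach-space duality.

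For part (ii), I would perform an explicit near-diagonal singularity analysis of the static kernel $(\widetilde{T}(\partial_y,\nu_y)\Pi_0(x,y))^\top$ in local Frenet coordinates at $y$. Using representation (\ref{Tform2}), each of the three building blocks $(\lambda+\mu)\nu\,(\nabla\cdot\,\cdot)$, $\mu\,\partial_\nu$ and $\widetilde{\mu}\,\mathcal{M}$ acting on $\Pi_0$ produces a Cauchy-type contribution of the form $(x-y)\cdot\nu_y/|x-y|^2$ together with tensorial variants. A direct calculation shows that the coefficient of the non-compact principal-value part of the total kernel is a scalar multiple of $\widetilde{\mu}(\lambda+3\mu)-\mu(\lambda+\mu)$, which vanishes precisely when $\widetilde{\mu}=\mu(\lambda+\mu)/(\lambda+3\mu)$. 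The remaining kernel is then at worst logarithmically singular, so $D_0$ is compact on $H^{1/2}(\Gamma)^2$; combined with the preliminary reduction this yields compactness of $D_\omega$, and the claim for $D_\omega^*$ follows by duality. The main obstacle lies here: identifying the exact combination of parameters that annihilates the Cauchy-type singularity requires careful use of the $\nabla_y\nabla_y^\top\log|x-y|$ identity and of the fact that $\mathcal{M}$ is a first-order tangential derivative. A clean route is to test the kernel against a smooth tangential density, isolate the non-smoothing part of the resulting singular integral, and read off the algebraic condition that forces its coefficient to vanish.
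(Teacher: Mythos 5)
Your proposal is correct, and it essentially reproduces the argument the paper itself uses one lemma later: the paper's proof of this lemma merely cites \cite{BXY21} for (i) and \cite{H98} for (ii), but the detailed proof of the generalizing Lemma~\ref{generalcompact} proceeds exactly as you do — replace $\Pi_\omega$ by the Kelvin tensor $\Pi_0$ modulo a compact difference, decompose $(\widetilde{T}(\partial_y,\nu_y)\Pi_0(x,y))^\top$ into a scalar multiple of the Cauchy-type kernel $\mathcal{K}_1(x,y)=\frac{\nu_y(x-y)^\top-(x-y)\nu_y^\top}{2\pi|x-y|^2}$ plus a weakly singular remainder, and use $K_1^2-\tfrac14 I$ compact (from \cite{AJKKY}) as the engine of polynomial compactness. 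Your identification of the annihilating condition in (ii) is exactly right: the coefficient of the Cauchy part is proportional to $2\mu\widetilde{\mu}+(\lambda+\mu)(\widetilde{\mu}-\mu)=\widetilde{\mu}(\lambda+3\mu)-\mu(\lambda+\mu)$, which vanishes precisely at $\widetilde{\mu}=\mu(\lambda+\mu)/(\lambda+3\mu)$, and the residual kernel is in fact bounded (since $\nu_y^\top(x-y)=O(|x-y|^2)$ on a smooth curve), hence stronger than merely log-singular, so compactness of $D_0$ and then $D_\omega$ follows. Your duality argument for $D_\omega^*$ is a clean alternative to the paper's ``analogously''-style remark.
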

\begin{proof}
The conclusion (i) was proved in~\cite[Theorem 1]{BXY21}. For (ii), we refer to~\cite{H98}.
\end{proof}

The property of the integral operators $D_\omega, D_\omega^*$ for general $\widetilde{\mu}$ are given in the following lemma.
\begin{lemma}
\label{generalcompact}
For all $\mu\in\R$, $\widetilde{P}_2(D_\omega)$ and $\widetilde{P}_2(D_\omega^*)$ are compact. Here,  $\widetilde{P}_2(t)=t^2-(\widetilde{C}_{\lambda,\mu,\widetilde{\mu}})^2I$ and $\widetilde{C}_{\lambda,\mu,\widetilde{\mu}}$ is a constant that depends on the Lam\'e parameters:
\be
\label{constC1}
\widetilde{C}_{\lambda,\mu,\widetilde{\mu}}= \frac{2\mu\widetilde{\mu}+(\lambda+\mu)(\widetilde{\mu}-\mu)}{4\mu(\lambda+2\mu)}.
\en
\end{lemma}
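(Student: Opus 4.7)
The plan is to exploit the affine dependence of $D_\omega$ on the parameter $\widetilde{\mu}$. Using the reformulation (2.5), one has $\widetilde{T} = T_{\mathrm{base}} + \widetilde{\mu}\,\mathcal{M}$ with $T_{\mathrm{base}}u:=(\lambda+\mu)\nu\,\divv u + \mu\,\pa_\nu u$ independent of $\widetilde{\mu}$, so the kernel $\widetilde{T}(\pa_y,\nu_y)\Pi_\omega(x,y)$ of $D_\omega$ is affine in $\widetilde{\mu}$. This yields, for any reference value $\widetilde{\mu}_0$,
\begin{equation*}
D_\omega(\widetilde{\mu}) = D_\omega(\widetilde{\mu}_0) + (\widetilde{\mu}-\widetilde{\mu}_0)\,B_\omega,
\end{equation*}
where $B_\omega:H^{1/2}(\Gamma)^2\to H^{1/2}(\Gamma)^2$ is the bounded, $\widetilde{\mu}$-independent operator with kernel $(\mathcal{M}(\pa_y,\nu_y)\Pi_\omega(x,y))^\top$ (boundedness is inherited from that of any two $D_\omega$'s at distinct $\widetilde{\mu}$). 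Lemma~\ref{specialcompact} supplies exactly the two anchor values needed: $\widetilde{\mu}_0:=\mu(\lambda+\mu)/(\lambda+3\mu)$, at which $D_\omega(\widetilde{\mu}_0)$ is compact, and $\widetilde{\mu}=\mu$, at which $D_\omega(\mu)^2-C_{\lambda,\mu}^2 I$ is compact.

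I would first invoke Lemma~\ref{specialcompact}(ii) at $\widetilde{\mu}=\widetilde{\mu}_0$ to conclude that, modulo compact operators, $D_\omega(\widetilde{\mu})\equiv(\widetilde{\mu}-\widetilde{\mu}_0)B_\omega$; squaring and using boundedness of $B_\omega$ together with the ideal property of the compact operators gives $D_\omega(\widetilde{\mu})^2\equiv(\widetilde{\mu}-\widetilde{\mu}_0)^2 B_\omega^2$. Specializing to $\widetilde{\mu}=\mu$ and applying Lemma~\ref{specialcompact}(i) then yields $(\mu-\widetilde{\mu}_0)^2 B_\omega^2\equiv C_{\lambda,\mu}^2 I$ modulo compact. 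Since $\mu-\widetilde{\mu}_0 = 2\mu^2/(\lambda+3\mu)\neq 0$ under the standing assumptions on the Lam\'e parameters, I can solve for $B_\omega^2$ up to a compact remainder and substitute back to find
\begin{equation*}
D_\omega(\widetilde{\mu})^2 \;\equiv\; \left(\frac{(\widetilde{\mu}-\widetilde{\mu}_0)\,C_{\lambda,\mu}}{\mu-\widetilde{\mu}_0}\right)^{\!2} I \pmod{\text{compact}}.
\end{equation*}

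The remaining step is an algebraic identification: the constant above equals $\widetilde{C}_{\lambda,\mu,\widetilde{\mu}}$ defined in (\ref{constC1}), which after clearing denominators reduces to the rearrangement $2\mu\widetilde{\mu}+(\lambda+\mu)(\widetilde{\mu}-\mu) = (\lambda+3\mu)\widetilde{\mu} - \mu(\lambda+\mu)$ in conjunction with $C_{\lambda,\mu}=\mu/(2(\lambda+2\mu))$. This establishes that $\widetilde{P}_2(D_\omega)$ is compact on $H^{1/2}(\Gamma)^2$; the analogous argument, in which the outer operator $\widetilde{T}(\pa_x,\nu_x)$ induces the same affine structure for $D_\omega^*$, handles the claim on $H^{-1/2}(\Gamma)^2$. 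The only genuine obstacle is conceptual, namely recognizing that Lemma~\ref{specialcompact} provides precisely the two anchor points needed to pin down $B_\omega^2$ modulo compact; once this is seen, the remainder is routine bookkeeping plus the above identity, and no new singularity analysis of the fundamental tensor $\Pi_\omega$ is required.
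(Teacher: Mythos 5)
Your proposal is correct, and it takes a genuinely different and more economical route than the paper. The paper's own proof works directly with the kernel: it passes to the zero-frequency operator $D_0$, computes the kernel $(\widetilde{T}(\pa_y,\nu_y)\Pi_0(x,y))^\top$ explicitly from (\ref{astress}), splits it as $2\widetilde{C}_{\lambda,\mu,\widetilde{\mu}}\,\mathcal{K}_1(x,y)+\mathcal{K}_2(x,y)$ where $\mathcal{K}_1$ is the singular tangential-Cauchy-type kernel and $\mathcal{K}_2$ has an extra factor $\nu_y^\top(x-y)=O(|x-y|^2)$, invokes the AJKKY result that $K_1^2-\tfrac14 I$ is compact, and then transfers to $D_\omega$ via the compactness of $D_\omega-D_0$. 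Your argument instead observes that (\ref{Tform2}) makes $D_\omega$ affine in $\widetilde{\mu}$, $D_\omega(\widetilde{\mu})=D_\omega(\widetilde{\mu}_0)+(\widetilde{\mu}-\widetilde{\mu}_0)B_\omega$, uses Lemma~\ref{specialcompact}(ii) to kill the constant term modulo compacts, pins down $B_\omega^2$ modulo compacts from Lemma~\ref{specialcompact}(i), and recovers $\widetilde{C}_{\lambda,\mu,\widetilde{\mu}}$ by an algebraic identity (which I checked: $\frac{(\widetilde{\mu}-\widetilde{\mu}_0)C_{\lambda,\mu}}{\mu-\widetilde{\mu}_0}=\frac{(\lambda+3\mu)\widetilde{\mu}-\mu(\lambda+\mu)}{4\mu(\lambda+2\mu)}=\widetilde{C}_{\lambda,\mu,\widetilde{\mu}}$, and $\mu-\widetilde{\mu}_0=2\mu^2/(\lambda+3\mu)>0$ since $\mu>0$ and $\lambda+\mu>0$). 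The ideal property of compacts then gives $D_\omega^2\equiv\widetilde{C}_{\lambda,\mu,\widetilde{\mu}}^2I$ modulo compact on $H^{1/2}$, and the same reasoning applies to $D_\omega^*$. What your approach buys is that no fresh singularity analysis of $\Pi_0$ is needed, and the constant in $\widetilde{P}_2$ emerges as a rational function of the anchor data rather than from a kernel computation; what you give up is any independent re-derivation, since your argument is entirely hostage to the two black-boxed special cases of Lemma~\ref{specialcompact}, whereas the paper's kernel decomposition is a direct verification that makes the cancellation structure visible. Both routes ultimately lean on the polynomial compactness of the elastic Neumann--Poincar\'e-type operator, so the logical dependency is comparable.
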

\begin{proof}
Let $D_0$ be the corresponding elastic double-layer operator in the zero-frequency case $\omega=0$ which is given by~\cite{HW08}
\ben
D_0[\phi](x) = \int_\G
(\widetilde{T}(\pa_y,\nu_y)\Pi_0(x,y))^\top\phi(y)ds_y,\quad x\in\G,
\enn
where
\ben
\Pi_0(x,y)=-C_{\lambda,\mu}^{(1)}\ln|x-y|\mathbb{I}+C_{\lambda,\mu}^{(2)}
\frac{(x-y)(x-y)^\top}{|x-y|^2}
\enn
with
\ben
C_{\lambda,\mu}^{(1)}=\frac{\lambda+3\mu}{4\pi\mu(\lambda+2\mu)},\quad C_{\lambda,\mu}^{(2)}=\frac{\lambda+\mu}{4\pi\mu(\lambda+2\mu)}.
\enn
Denote by $[\cdot]_{ij}, i,j=1,2$ the elements of a $2\times 2$ matrix. Then direct calculation using (\ref{astress}) yields that
\be
\label{part1}
-\left[\widetilde{T}(\pa_y,\nu_y)\ln|x-y|\mathbb{I}\right]_{ij} &=& (\lambda+\mu)\frac{\nu_{y,i}(x_j-y_j)}{|x-y|^2} +\mu\delta_{ij}\frac{\nu_y^\top(x-y)}{|x-y|^2} \nonumber\\
&\quad& +\widetilde{\mu}\frac{\nu_{y,j}(x_i-y_i)-\nu_{y,i}(x_j-y_j)}{|x-y|^2},
\en
and
\be
\label{part2}
\left[\widetilde{T}(\pa_y,\nu_y)\frac{(x-y)(x-y)^\top}{|x-y|^2}\right]_{ij} &=& -\widetilde{\lambda}\frac{\nu_{y,i}(x_j-y_j)}{|x-y|^2} +2(\mu+\widetilde{\mu})\frac{\nu_y^\top(x-y)}{|x-y|^4}(x_i-y_i)(x_j-y_j) \nonumber\\
&\quad& -(\widetilde{\mu}+\mu) \frac{\nu_{y,i}(x_j-y_j)+\nu_{y,j}(x_i-y_i)}{|x-y|^2}\nonumber\\
&\quad& -\widetilde{\mu} \frac{\nu_y^\top(x-y)\delta_{ij}-\nu_{y,j}(x_i-y_i)}{|x-y|^2}
\en
Then $C_{\lambda,\mu}^{(1)}\times(\ref{part1}) +C_{\lambda,\mu}^{(2)}\times(\ref{part2})$ gives
\ben
(\widetilde{T}(\pa_y,\nu_y)\Pi_0(x,y))^\top= 2\widetilde{C}_{\lambda,\mu,\widetilde{\mu}} \mathcal{K}_1(x,y)+\mathcal{K}_2(x,y)~,
\enn
where
\ben
\mathcal{K}_1(x,y)= \frac{\nu_y(x-y)^\top-(x-y)\nu_y^\top}{2\pi|x-y|^2},
\enn
\ben
\mathcal{K}_2(x,y)= \frac{2\mu^2+(\lambda+\mu)(\widetilde{\mu}-\mu)}{4\pi\mu(\lambda+2\mu)} \frac{\nu_y^\top(x-y)}{|x-y|^2}\mathbb{I} +\frac{(\lambda+\mu)(\widetilde{\mu}+\mu)}{2\pi\mu(\lambda+2\mu)} \frac{\nu_y^\top(x-y)}{|x-y|^4}(x-y)(x-y)^\top.
\enn
Define the integral operators $K_j,j=1,2$ in the sense of Cauchy principle value as follows:
\ben
K_j[\phi](x)=\int_\Gamma \mathcal{K}_j(x,y)\phi(y)ds_y,\quad j=1,2.
\enn
Due to the property that $\nu_y^\top(x-y)=O(|x-y|^{2})$~\cite{CK83}, we conclude that the operator $K_2:H^{1/2}(\Gamma)^2\rightarrow H^{1/2}(\Gamma)^2$ is compact. For the operator $K_1$, it has been proved in~\cite[Proposition 3.1]{AJKKY} that
$K_1^2-\frac{1}{4}I:H^{1/2}(\Gamma)^2\rightarrow H^{1/2}(\Gamma)^2$ is compact. Then it follows that $\widetilde{P}_2(D_0):H^{1/2}(\Gamma)^2\rightarrow H^{1/2}(\Gamma)^2$ is compact. Then the compactness of $\widetilde{P}_2(D_\omega)$ results immediately from the fact that $D_\omega-D_0:H^{1/2}(\Gamma)^2\rightarrow H^{1/2}(\Gamma)^2$ is compact, since $D_\omega-D_0$ has a at-most weakly singular kernel, and
\ben
\widetilde{P}_2(D_\omega)=\widetilde{P}_2(D_0)+(D_\omega-D_0)D_\omega+D_0(D_\omega-D_0).
\enn
The compactness of $\widetilde{P}_2(D_\omega^*)$ can be proved analogously.
\end{proof}

\begin{remark}
If $\widetilde{\mu}=\mu$ or $\widetilde{\mu}=\frac{\mu(\lambda-\mu)}{\lambda+3\mu}$, then $|\widetilde{C}_{\lambda,\mu,\widetilde{\mu}}|=|C_{\lambda,\mu}|$ and therefore, $P_2(D_\omega)$ and $P_2(D_\omega^*)$ are compact which includes the result in Lemma~\ref{specialcompact}(i). If $\widetilde{\mu}=\frac{\mu(\lambda+\mu)}{\lambda+3\mu}$, then we know that $\widetilde{C}_{\lambda,\mu,\widetilde{\mu}}=0$. In this case, the compactness of $D_\omega$ (Lemma~\ref{specialcompact}(ii)) can be deduced since $K_2$ is compact.
\end{remark}

As stated in the following theorem, the generalized elastic Calder\'on relation for closed-surface problem is a direct corollary of Lemma~\ref{generalcompact}.  Note that under the special values (\ref{theorempara}) of $\widetilde{\mu}$, it holds that $|\widetilde{C}_{\lambda,\mu,\widetilde{\mu}}|=\frac{1}{2}$.

\begin{theorem}
\label{main1}
If
\be
\label{theorempara}
\widetilde{\mu}=-\mu\quad\mbox{or}\quad \widetilde{\mu}=\frac{\mu(3\lambda+5\mu)}{\lambda+3\mu},
\en
the compositions $S_\omega N_\omega:H^{1/2}(\Gamma)^2\rightarrow H^{1/2}(\Gamma)^2$ and $N_\omega S_\omega:H^{-1/2}(\Gamma)^2\rightarrow H^{-1/2}(\Gamma)^2$ are compact. Otherwise, the compositions $S_\omega N_\omega$ and $N_\omega S_\omega$ are Fredholm operators of second-kind both of which consists of a non-empty sequence of eigenvalues converging to $-\frac{1}{4}+(\widetilde{C}_{\lambda,\mu,\widetilde{\mu}})^2$ (Fig.~\ref{Fig1}).
\end{theorem}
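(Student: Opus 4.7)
The plan is to read Theorem~\ref{main1} as a direct algebraic consequence of the closed-surface Calder\'on identities (\ref{ClosedCaldron}) together with the polynomial-compactness result Lemma~\ref{generalcompact}. From the latter I know that $(D_\omega)^2 = (\widetilde{C}_{\lambda,\mu,\widetilde{\mu}})^2\, I + K_1$ on $H^{1/2}(\Gamma)^2$ and $(D_\omega^*)^2 = (\widetilde{C}_{\lambda,\mu,\widetilde{\mu}})^2\, I + K_2$ on $H^{-1/2}(\Gamma)^2$ with $K_1,K_2$ compact, so substituting into (\ref{ClosedCaldron}) gives
\[
S_\omega N_\omega \;=\; \Bigl(-\tfrac{1}{4}+(\widetilde{C}_{\lambda,\mu,\widetilde{\mu}})^2\Bigr) I + K_1, \qquad N_\omega S_\omega \;=\; \Bigl(-\tfrac{1}{4}+(\widetilde{C}_{\lambda,\mu,\widetilde{\mu}})^2\Bigr) I + K_2 .
\]
This single display is the engine of the proof: everything else is reading off its consequences.

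Next I would identify the two parameter values for which the leading coefficient $-\tfrac14+(\widetilde{C}_{\lambda,\mu,\widetilde\mu})^2$ vanishes. Using the explicit form (\ref{constC1}), the equation $\widetilde{C}_{\lambda,\mu,\widetilde\mu}=\pm\tfrac12$ reduces to a linear equation in $\widetilde\mu$, namely $\widetilde\mu(\lambda+3\mu)=\mu(3\lambda+5\mu)$ for the $+\tfrac12$ root and $\widetilde\mu(\lambda+3\mu)=-\mu(\lambda+3\mu)$ for the $-\tfrac12$ root. These are precisely the two special values listed in (\ref{theorempara}), and for either one the compositions $S_\omega N_\omega$ and $N_\omega S_\omega$ reduce to $K_1$ and $K_2$, hence are compact on the stated Sobolev spaces.

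For all other admissible values of $\widetilde{\mu}$, the coefficient $\beta:=-\tfrac14+(\widetilde{C}_{\lambda,\mu,\widetilde\mu})^2$ is a nonzero scalar, so $S_\omega N_\omega = \beta I + K_1$ and $N_\omega S_\omega = \beta I + K_2$ are compact perturbations of a nonzero scalar multiple of the identity. By the standard Riesz--Schauder / Fredholm alternative this immediately yields the Fredholm-of-second-kind property, and the spectrum outside $\{\beta\}$ consists of isolated eigenvalues of finite multiplicity that can only accumulate at $\beta$. To confirm that this eigenvalue sequence is actually non-empty (so that $\beta$ really is an accumulation point rather than an isolated value) I would argue that $K_1$ is not a finite-rank operator: indeed $D_\omega$ itself is non-compact on $H^{1/2}(\Gamma)^2$ (as recalled in the introduction and as is visible from the leading kernel $\mathcal K_1$ in the proof of Lemma~\ref{generalcompact}), so $(D_\omega)^2$ cannot be a finite-rank perturbation of $(\widetilde C_{\lambda,\mu,\widetilde\mu})^2 I$, whence $K_1$ has infinite-dimensional range and therefore infinitely many nonzero eigenvalues accumulating only at $0$; correspondingly $S_\omega N_\omega$ has infinitely many eigenvalues accumulating only at $\beta$. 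The same argument, transposed to $H^{-1/2}(\Gamma)^2$, covers $N_\omega S_\omega$.

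I expect the routine parts, namely the substitution into (\ref{ClosedCaldron}) and the solution of $\widetilde C_{\lambda,\mu,\widetilde\mu}=\pm\tfrac12$, to be entirely mechanical. The only genuinely delicate point is the non-emptiness of the eigenvalue sequence: one must rule out the degenerate possibility that $K_1$ or $K_2$ vanishes or is finite rank. This is where the non-compactness of the elastic double-layer operator $D_\omega$ (in contrast to the acoustic case) is crucial and must be invoked explicitly; once that is on the table, the conclusion about the accumulation point $-\tfrac14+(\widetilde C_{\lambda,\mu,\widetilde\mu})^2$ follows immediately from compact-operator spectral theory.
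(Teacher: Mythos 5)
Your proposal reconstructs exactly the argument the paper intends: Theorem~\ref{main1} is announced in the text as a direct corollary of Lemma~\ref{generalcompact} inserted into the Calder\'on identity (\ref{ClosedCaldron}), with the remark that the two parameter values in (\ref{theorempara}) are precisely those for which $|\widetilde{C}_{\lambda,\mu,\widetilde{\mu}}|=\tfrac12$, and your solution of $\widetilde{C}_{\lambda,\mu,\widetilde{\mu}}=\pm\tfrac12$ is algebraically correct. The one place where your write-up goes a bit further than the paper and, in doing so, introduces a small logical gap is the non-emptiness claim: you infer ``$K_1$ has infinite-dimensional range, therefore $K_1$ has infinitely many nonzero eigenvalues.'' That implication is false for general compact operators (a quasi-nilpotent compact operator such as a Volterra integral operator has infinite rank but empty nonzero point spectrum). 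To close this you need structural information beyond mere non-compactness of $D_\omega$; the correct source is the spectral analysis of the elastostatic Neumann--Poincar\'e operator in \cite{AJKKY,AKM}, which shows that $D_0$ (and hence $D_\omega$, after the compact perturbation $D_\omega-D_0$) possesses an infinite discrete spectrum with the accumulation structure that forces $(D_\omega)^2$ to differ from $(\widetilde{C}_{\lambda,\mu,\widetilde{\mu}})^2 I$ by a compact operator with genuinely non-empty nonzero spectrum. Citing that result rather than the ``infinite-rank $\Rightarrow$ infinitely many eigenvalues'' shortcut would make the non-emptiness step watertight.
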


\begin{figure}[htb]
\centering
\begin{tabular}{cc}
\includegraphics[scale=0.4]{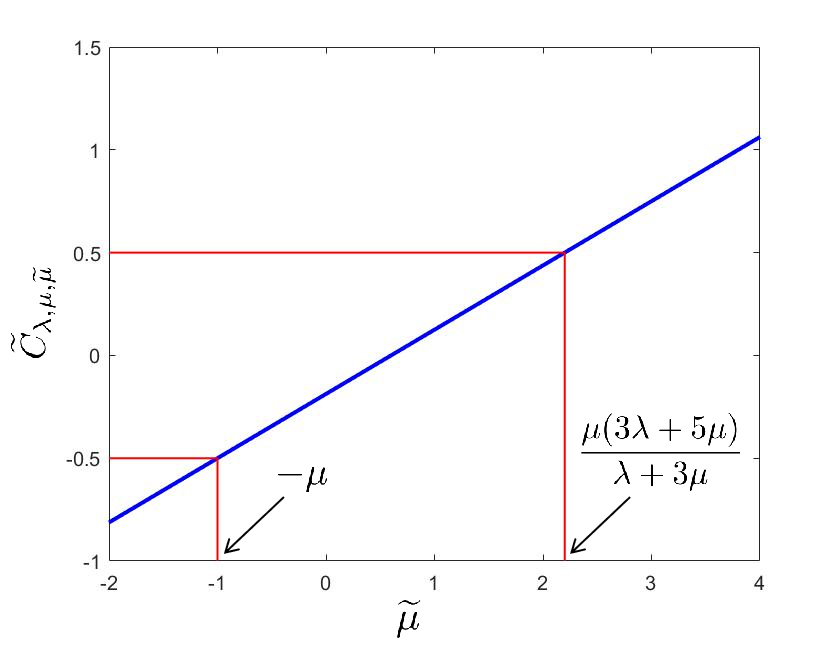} &
\includegraphics[scale=0.4]{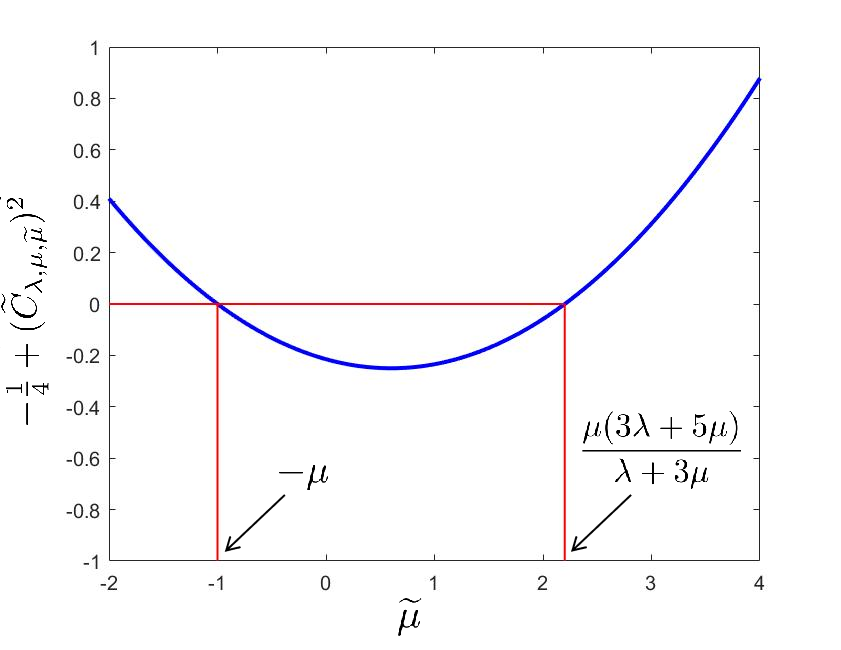}
\end{tabular}
\caption{Blue lines: values of $\widetilde{C}_{\lambda,\mu,\widetilde{\mu}}$ and $-\frac{1}{4}+(\widetilde{C}_{\lambda,\mu,\widetilde{\mu}})^2$ with respect to $\widetilde{\mu}$. Here, $\mu=1$, $\lambda=2$.}
\label{Fig1}
\end{figure}

Unfortunately, the generalized elastic Calder\'on relation for the closed-surface problem does not hold for the open-surface case any more, see Section~\ref{sec:4.1}. Before discussing the corresponding Calder\'on relation for the open-surface problem, we provide with some useful formulations in the closed-surface case which can be extended to the open-surface case for certain weighted integral operators introduced in Section~\ref{sec:4.1}. Let $N_0$ be the corresponding elastic hyper-singular operator in the zero-frequency case $\omega=0$ which is given by
\be
\label{N01}
N_0[\phi](x) = \widetilde{T}(\pa_x,\nu_x)\int_\G
(\widetilde{T}(\pa_y,\nu_y)\Pi_0(x,y))^\top\phi(y)ds_y,\quad x\in\G.
\en

\begin{lemma}
\label{lemmaN0}
The hyper-singular operator $N_0$ can be reformulated as
\be
\label{N02}
N_0[\phi](x) = \pa_{\tau_x}V_0[\pa_{\tau}\phi](x),
\en
where the integral operator $V_0:H^{-1/2}(\Gamma)^2\rightarrow H^{1/2}(\Gamma)^2$ is defined as
\ben
V_0[\chi](x) = \int_\Gamma \left[-C_{\lambda,\mu,\widetilde{\mu}}^{(1)} \ln|x-y|\mathbb{I}+C_{\lambda,\mu,\widetilde{\mu}}^{(2)} \frac{(x-y)(x-y)^\top}{|x-y|^2} \right] \chi(y)ds_y,
\enn
where
\ben
C_{\lambda,\mu,\widetilde{\mu}}^{(1)} &=& -\frac{(\widetilde{\mu}+\mu) [(\lambda+\mu)(\widetilde{\mu}-3\mu)+2\mu(\widetilde{\mu}-\mu)]}{4\pi\mu(\lambda+2\mu)},\\
C_{\lambda,\mu,\widetilde{\mu}}^{(2)} &=& \frac{(\lambda+\mu)(\widetilde{\mu}+\mu)^2}{4\pi\mu(\lambda+2\mu)}.
\enn
\end{lemma}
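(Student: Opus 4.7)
The plan is to derive the Maue-type representation (\ref{N02}) from (\ref{N01}) by a double surface integration by parts on the closed curve $\Gamma$, using the G\"unter-derivative decomposition (\ref{Tform2}) to convert the two occurrences of $\widetilde{T}$ into tangential derivatives that act on $\phi$ in the $y$-variable and on the overall integral in the $x$-variable. The central identity is $\mathcal{M}(\pa,\nu)u=A\pa_\tau u$, together with the Stokes relation $\int_\Gamma (\pa_\tau f)\cdot g\,ds=-\int_\Gamma f\cdot \pa_\tau g\,ds$ which produces no boundary contribution on a closed curve.

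First, I would split the inner traction $\widetilde{T}(\pa_y,\nu_y)\Pi_0(x,y)$ according to (\ref{Tform2}) into three pieces: the $(\lambda+\mu)\nu_y(\nabla_y\cdot)$ piece, the $\mu\pa_{\nu_y}$ piece, and the G\"unter piece $\widetilde{\mu}A\pa_{\tau_y}$. Pairing against $\phi(y)$ and integrating by parts on $\Gamma$ immediately transfers the G\"unter piece onto $\pa_\tau\phi$ with a sign flip and with $A^\top=-A$. For the two remaining pieces I would use the reflection $\nabla_y\Pi_0(x,y)=-\nabla_x\Pi_0(x,y)$ to move $y$-derivatives of $\Pi_0$ to $x$-derivatives, and then invoke the static Lam\'e identity $\Delta_x^*\Pi_0(x,y)=0$ for $x\neq y$ to eliminate the genuinely normal $x$-derivatives in favor of tangential ones plus weakly singular remainders.

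Second, I would apply the outer traction $\widetilde{T}(\pa_x,\nu_x)$ to the kernel that has just emerged and repeat the G\"unter decomposition in $x$. Again the $\widetilde{\mu}A\pa_{\tau_x}$ piece pulls out of the integral as an overall tangential derivative, while the $(\lambda+\mu)\nu_x(\nabla_x\cdot)$ and $\mu\pa_{\nu_x}$ pieces are traded for tangential derivatives modulo weakly singular (logarithmic) remainders via the same Lam\'e identity. After both steps every surviving contribution has the form $\pa_{\tau_x}(\text{kernel})\pa_\tau\phi$ with a $2\times 2$ matrix kernel built out of $\ln|x-y|\mathbb{I}$ and $(x-y)(x-y)^\top/|x-y|^2$. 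The constants $C^{(1)}_{\lambda,\mu,\widetilde{\mu}}$ and $C^{(2)}_{\lambda,\mu,\widetilde{\mu}}$ of the lemma are then read off by linearly combining the prefactors $C^{(1)}_{\lambda,\mu}$, $C^{(2)}_{\lambda,\mu}$ of $\Pi_0$ with the coefficients $\lambda+\mu$, $\mu$, $\widetilde{\mu}$ that appear twice, using $\widetilde{\lambda}+\widetilde{\mu}=\lambda+\mu$ to collapse cross terms.

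The main obstacle will be the bookkeeping: during the two integrations by parts, strongly singular kernels of orders $|x-y|^{-2}$ and $|x-y|^{-4}$ (of exactly the type visible in (\ref{part1}) and (\ref{part2})) appear as intermediate contributions, and the whole argument relies on their exact cancellation. That cancellation in turn depends on the antisymmetry $A^\top=-A$, on the Lam\'e constraint $\widetilde{\lambda}+\widetilde{\mu}=\lambda+\mu$, and on the identity $\nu_y^\top(x-y)=O(|x-y|^2)$ on a $C^2$-curve. Organizing the algebra so that all non-integrable pieces drop out and only the logarithmic kernel together with the bounded rank-one tensor $(x-y)(x-y)^\top/|x-y|^2$ survives is where the work is delicate; once that is carried out, the explicit constants $C^{(1)}_{\lambda,\mu,\widetilde{\mu}}$ and $C^{(2)}_{\lambda,\mu,\widetilde{\mu}}$ are forced by matching coefficients.
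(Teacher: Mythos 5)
Your plan follows the route the paper intends: the paper's ``proof'' is a one-line citation to \cite[Lemma 2.2.3]{HW08}, which is exactly the G\"unter-derivative, Maue-type integration by parts you sketch, with the generalized constants obtained by carrying $\widetilde{\mu}$ through the same algebra. You have correctly identified the pivot points: $\mathcal{M}(\partial,\nu)u=A\partial_\tau u$, the antisymmetry $A^\top=-A$, the closed-curve Stokes identity $\int_\Gamma(\partial_\tau f)\cdot g\,ds=-\int_\Gamma f\cdot\partial_\tau g\,ds$, the reflection $\nabla_y\Pi_0=-\nabla_x\Pi_0$, and the constraint $\widetilde{\lambda}+\widetilde{\mu}=\lambda+\mu$.

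One caution about the way you have ordered the steps: you say kernels of order $|x-y|^{-2}$ and $|x-y|^{-4}$ will ``appear as intermediate contributions'' and then cancel. A surface integral with a $|x-y|^{-4}$ kernel on a curve does not exist even as a Cauchy principal value, so a derivation that writes such intermediate integrals down and then invokes cancellation is not rigorous as stated. The organization to extract from \cite[Lemma 2.2.3]{HW08} avoids this: one first rewrites the \emph{kernel} $(\widetilde{T}(\partial_y,\nu_y)\Pi_0(x,y))^\top$, at the level of functions of $(x,y)$ with $x\neq y$, as a total tangential $y$-derivative of a weakly singular matrix potential (this is where the static Lam\'e equation $\Delta^*\Pi_0=0$ and the $2\times2$ structure are spent); one then integrates by parts in $y$ to land $\partial_{\tau_y}$ on $\phi$; and only afterwards applies $\widetilde{T}(\partial_x,\nu_x)$, repeating the same kernel identity in $x$ by translation symmetry to pull out $\partial_{\tau_x}$. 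Done in that order, every integral that is actually written has at worst a logarithmic kernel, no nonintegrable pieces are summoned and dismissed, and $C^{(1)}_{\lambda,\mu,\widetilde{\mu}}$, $C^{(2)}_{\lambda,\mu,\widetilde{\mu}}$ fall out of a single coefficient match against $-C^{(1)}\ln|x-y|\,\mathbb{I}+C^{(2)}(x-y)(x-y)^\top/|x-y|^2$. So the strategy is right; the particular ordering you propose would hit a genuine obstruction if executed literally.
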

\begin{proof}
The proof of this lemma follows from the same steps as~\cite[Lemma 2.2.3]{HW08} and is omitted here.
\end{proof}
\begin{remark}
If $\widetilde{\mu}=\mu$, then $C_{\lambda,\mu,\widetilde{\mu}}^{(1)}=-C_{\lambda,\mu,\widetilde{\mu}}^{(2)}= \frac{\mu(\lambda+\mu)}{\pi(\lambda+2\mu)}$ and (\ref{N02}) is exactly the regularized formulation~\cite[(2.2.32)]{HW08}. If $\widetilde{\mu}=-\mu$, then $N_0=0$. If $\widetilde{\mu}=\frac{\mu(3\lambda+5\mu)}{\lambda+3\mu}$ which means that $C_{\lambda,\mu,\widetilde{\mu}}^{(1)}=0$, it follows that the kernel of $V_0$ is bounded!
\end{remark}

\begin{lemma}
The hyper-singular boundary integral operator $N_\omega$ can be expressed alternatively as
\be
\label{N2}
&\quad& N_\omega[\psi](x)\nonumber\\
&=& -\int_\Gamma\left[\rho\omega^2 (\nu_x\nu_y^\top-\nu_x^\top\nu_yI)G_{k_s}(x,y)-\widetilde{\mu}k_s^2G_{k_s}(x,y)J_{\nu_x,\nu_y}- \rho\omega^2G_{k_p}(x,y)\nu_x\nu_y^\top\right]\psi(y)ds_y\nonumber\\
&\quad& +(\mu+\widetilde{\mu})^2\pa_{\tau_x}\int_\Gamma
A\Pi_\omega(x,y)A\pa_{\tau_y}\psi(y) ds_y\nonumber\\
&\quad& +2(\mu+\widetilde{\mu})\pa_{\tau_x}\int_\Gamma G_{k_s}(x,y)\pa_{\tau_y}\psi(y)ds_y \nonumber\\
&\quad& -(\mu+\widetilde{\mu}) \int_\Gamma  \nu_{x} \nabla _x^\top [G_{k_s}(x,y)-G_{k_p}(x,y)]A\pa_{\tau_y}\psi(y)ds_y\nonumber\\
&\quad& -(\mu+\widetilde{\mu})\pa_{\tau_x}\int_\Gamma A\nabla_y
[G_{k_s}(x,y)-G_{k_p}(x,y)]\nu_y^\top \psi(y)ds_y.
\en
\end{lemma}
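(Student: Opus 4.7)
The plan is to derive (\ref{N2}) through a Maue-type manipulation, analogous to the classical regularization of the acoustic hyper-singular operator: use the equivalent decomposition (\ref{Tform2}) to isolate the tangential components of $\widetilde{T}$, transfer them onto the density $\psi$ via tangential integration by parts on the closed curve $\Gamma$, and exploit the Helmholtz equations satisfied by $G_{k_s}$ and $G_{k_p}$ together with the structure of $\Pi_\omega$ to reduce second-order derivatives of the fundamental tensor to lower-order kernels weighted by $k_s^2$, $k_p^2$ or $\rho\omega^2$.

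First I would apply (\ref{Tform2}) inside the integral to split $\widetilde{T}(\pa_y,\nu_y)\Pi_\omega(x,y)$ into a divergence piece $(\lambda+\mu)\nu_y(\nabla_y\cdot\Pi_\omega)$, a normal-derivative piece $\mu\pa_{\nu_y}\Pi_\omega$, and a G\"unter piece $\widetilde{\mu}A\pa_{\tau_y}\Pi_\omega$. Because $\Pi_\omega(x,y)$ depends only on $x-y$, the identity $\nabla_y=-\nabla_x$ together with the representation $\Pi_\omega=\frac{1}{\mu}G_{k_s}\mathbb{I}+\frac{1}{\rho\omega^2}\nabla_x\nabla_x^\top(G_{k_s}-G_{k_p})$ and the scalar Helmholtz equations at $x\neq y$ allow the divergence piece to be rewritten as a linear combination of $\nabla_x G_{k_s}$ and $\nabla_x G_{k_p}$ acting on $\nu_y$, while the normal-derivative piece is recast using $\pa_{\nu_y}=\nu_y\cdot\nabla_y$. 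An analogous decomposition is applied to $\widetilde{T}(\pa_x,\nu_x)$ outside the integral, producing nine coupled contributions classified by which component of $\widetilde{T}$ appears on each side.

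The next step is integration by parts on $\Gamma$: every $\pa_{\tau_y}$ acting on the kernel is moved onto $\psi$ (with no boundary contributions since $\Gamma$ is closed) and every $\pa_{\tau_x}$ is pulled outside the integral. Grouping the nine terms by structure matches them with the five lines of (\ref{N2}): the G\"unter$\times$G\"unter pairing yields the $(\mu+\widetilde{\mu})^2\pa_{\tau_x}\int_\Gamma A\Pi_\omega A\pa_{\tau_y}\psi\,ds_y$ line; the pairings of the normal or divergence piece on one side with the G\"unter piece on the other produce the $2(\mu+\widetilde{\mu})\pa_{\tau_x}\int_\Gamma G_{k_s}\pa_{\tau_y}\psi\,ds_y$ line together with the two mixed gradient terms on the fourth and fifth lines; finally, the normal$\times$normal, normal$\times$divergence and divergence$\times$divergence pairings produce, after eliminating the remaining Laplacians through the two scalar Helmholtz PDEs, the first line of $\rho\omega^2$- and $\widetilde{\mu}k_s^2$-weighted kernels.

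The main obstacle is the bookkeeping: the composition of two elastic traction operators generates many intermediate terms carrying up to third-order derivatives of $G_{k_s}$ and $G_{k_p}$, and one must verify that after the two passes of tangential integration by parts and the Helmholtz reductions, the only second-order derivative of $\Pi_\omega$ surviving in the final expression is the ``$(\mu+\widetilde{\mu})^2\pa_{\tau_x}(A\Pi_\omega A)\pa_{\tau_y}$'' block, all other kernels being reduced either to scalar Helmholtz fundamental solutions or to their first gradients contracted against $\nu_x$ or $\nu_y$. Working throughout in the local orthonormal frame $\{\nu,\tau\}$ with the splitting $\nabla=\nu\pa_\nu+\tau\pa_\tau$ makes the algebraic cancellations that produce the coefficients $(\mu+\widetilde{\mu})^2$, $2(\mu+\widetilde{\mu})$ and $\widetilde{\mu}k_s^2$ transparent; with that framing in place, the remainder of the argument reduces to a direct (if lengthy) matching of coefficients line-by-line with (\ref{N2}).
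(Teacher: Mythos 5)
The paper does not actually prove this lemma --- the proof is omitted with only a pointer to \cite[Theorem 6.2]{YHX17} --- so there is no written argument to compare against. Your Maue-type plan (isolate a tangential G\"unter piece in each traction, integrate by parts along the closed curve, and eliminate the remaining higher-order derivatives of $\Pi_\omega$ via the Helmholtz equations satisfied by $G_{k_p},G_{k_s}$) is the natural strategy, and is presumably what the cited reference does for $\widetilde\mu=\mu$; the generalization you want follows the same lines.

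There is, however, a concrete obstruction to executing the plan from (\ref{Tform2}) as you propose. In (\ref{Tform2}) the G\"unter piece carries coefficient $\widetilde\mu$, so the G\"unter$\times$G\"unter pairing yields $\widetilde\mu^{\,2}\,\pa_{\tau_x}\!\int_\Gamma A\Pi_\omega A\,\pa_{\tau_y}\psi\,ds_y$, not the required $(\mu+\widetilde\mu)^2$; the remaining $\mu^2+2\mu\widetilde\mu$ is buried inside the $\mu\,\pa_\nu$ piece, which is \emph{not} purely tangential and cannot simply be integrated by parts as you describe. The clean route is to substitute $\pa_\nu=\mathcal M(\pa,\nu)+\nu\,\divv-\tau\,\curl$ into (\ref{astress}) and use $\widetilde\lambda+\widetilde\mu=\lambda+\mu$ to obtain the alternative split
\ben
\widetilde{T}(\pa,\nu)u = (\mu+\widetilde\mu)\,\mathcal M(\pa,\nu)u + (\lambda+2\mu)\,\nu\,\divv u - \mu\,\tau\,\curl u.
\enn
Now the purely tangential G\"unter part already carries the full factor $(\mu+\widetilde\mu)$, and, since $\Delta G_{k_j}=-k_j^2G_{k_j}$ with $k_s^2=\rho\omega^2/\mu$, $k_p^2=\rho\omega^2/(\lambda+2\mu)$, one finds $\divv\,\Pi_\omega=(\lambda+2\mu)^{-1}\nabla G_{k_p}$ while $\curl\,\Pi_\omega$ reduces to $\mu^{-1}$ times a rotated gradient of $G_{k_s}$; the divergence and curl pieces of $\widetilde T\Pi_\omega$ therefore collapse to first gradients of the scalar Helmholtz fundamental solutions. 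With this decomposition the nine pairings organize directly into the five lines of (\ref{N2}), and the coefficients $(\mu+\widetilde\mu)^2$, $2(\mu+\widetilde\mu)$, $\widetilde\mu k_s^2$ and $\rho\omega^2$ fall out by term-by-term matching; the ``transparent cancellations'' you invoke do not appear if you insist on starting from (\ref{Tform2}).
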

\begin{proof}
This result is a generalized form of~\cite[Theorem 6.2]{YHX17} and the proof is also omitted.
\end{proof}

\section{Calder\'on relation: open-surface}
\label{sec:4}

In this section, we study the elastic Calder\'on relation in the open-surface case. However, unlike the closed-surface case, an appropriate functional setting for the investigation of the composite operator $N_\omega S_\omega$ defined on an open-arc $\Gamma$  seems to be non-existing. As demonstrated in \cite[Appendix B]{LB15}, even for the simplest open-surface--straight arc $\{x_1\in[-1,1], x_2=0\}$, the acoustic composite operator $N_\omega S_\omega$ maps the constant function with value $1$ into a function possessing $d^{-1}$ edge singularity which does not belong to $H^{-1/2}(\Gamma)$, where $d=d(x)$ denotes the distance between $x$ and the corresponding end point for any $x$ in a neighbourhood of each end point. This difficulty also appears in the elastic open-surface case since both the acoustic and elastic integral operators $N_\omega, S_\omega$ defined on the straight arc contain similar singular kernels.

\subsection{Weighted integral operators}
\label{sec:4.1}

Instead of studying the operators $N_\omega, S_\omega$, we discuss the weighted forms $N_\omega^{\rm w}, S_\omega^{\rm w}$ resulting from certain regularity of potentials. Let $\Gamma$ be a smooth open arc in $\R^2$ and the unbounded domain $\R^2\backslash\Gamma$ is fulfilled with a linear isotropic and homogeneous elastic medium. Then the time-harmonic problem of elastic scattering by an open-surface can be modeled by the  Navier equation
\be
\label{navieropen}
\Delta^*u+\rho\omega^2u=0 \quad\mbox{in}\quad\R^2\backslash\Gamma,
\en
together with the boundary conditions
\be
\label{BC}
\begin{cases}
\mbox{Dirichlet}: & u=F \cr
\mbox{Neumann}: & \widetilde{T}(\pa,\nu)u=G
\end{cases}\quad\mbox{on}\quad \Gamma,
\en
and the Kupradze radiation condition at infinity~\cite{KGBB79}. Here, $u=(u_1,u_2)^\top$ denotes the displacement field. \begin{lemma}
The open-arc elastic scattering problem (\ref{navieropen})-(\ref{BC}) has at most one solution.
\end{lemma}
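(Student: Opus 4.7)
The plan is to run the standard Rellich--Kupradze energy argument, adapted to the open-arc geometry. Let $u\in H^1_{\rm loc}(\R^2\ba\G)^2$ solve the homogeneous version of (\ref{navieropen})--(\ref{BC}) (set $F=0$ in the Dirichlet case, $G=0$ in the Neumann case) under the Kupradze radiation condition; I will show $u\equiv 0$. First I would isolate $\G$ by a tubular neighbourhood $\mathcal N_\vep$ of width $\vep$ and truncate at radius $R$ large enough that $\G\subset B_R$. Setting $\Om_{R,\vep}=B_R\ba\overline{\mathcal N_\vep}$ and applying the first Betti--Green identity associated with the generalised traction $\widetilde T$ of (\ref{astress}) yields
\ben
\int_{\Om_{R,\vep}}\bigl(\mathcal E_{\widetilde T}(u,\overline u)-\rho\om^2|u|^2\bigr)\,dx \;=\; \int_{\pa B_R}\overline u\cdot\widetilde T u\,ds \;-\; \int_{\pa\mathcal N_\vep}\overline u\cdot\widetilde T u\,ds,
\enn
where $\mathcal E_{\widetilde T}$ is the symmetric quadratic form canonically paired with $\widetilde T$. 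Because the volume integrand is real, taking the imaginary part kills it.

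Next I would let $\vep\to 0$. The tube boundary $\pa\mathcal N_\vep$ splits into two long pieces collapsing onto $\G^+$ and $\G^-$ and two small endpoint semicircles. On the long pieces, standard trace theory delivers $u\to u^\pm$ and $\widetilde T u\to \widetilde T u^\pm$ in the $H^{1/2}(\G)\times H^{-1/2}(\G)$ duality, and the homogeneous boundary data annihilate the pairing $\langle \widetilde T u^\pm,\overline{u^\pm}\rangle$ in both the Dirichlet and Neumann cases. At each endpoint, the worst admissible edge behaviour of a finite-energy solution, $u=O(d^{1/2})$ and $\widetilde T u=O(d^{-1/2})$ with $d$ the distance to the endpoint, gives an integrand of size $O(1)$ on a semicircle of length $O(\vep)$, so the endpoint contributions vanish. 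Thus $\Ima\int_{\pa B_R}\overline u\cdot\widetilde T u\,ds=0$.

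Finally, in $|x|>R$ I would decompose $u=u_p+u_s$ via the Helmholtz--Kupradze representation, with $u_p=\grad\varphi_p$ irrotational and $u_s$ divergence-free, the scalar potentials $\varphi_{p,s}$ satisfying $(\Delta+k_{p,s}^2)\varphi_{p,s}=0$ and the Sommerfeld radiation conditions inherited from the Kupradze condition. Expanding $\overline u\cdot\widetilde T u$ on $\pa B_R$ in terms of $\varphi_p,\varphi_s$ and passing to $R\to\infty$ via the far-field expansions, the $(k_p\pm k_s)$-oscillatory cross terms average away and the imaginary flux reduces to a positive combination of the $L^2$ norms of the compressional and shear far-field patterns, forcing both patterns to vanish. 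Rellich's lemma for the scalar Helmholtz equation then gives $\varphi_p\equiv\varphi_s\equiv 0$ in $|x|>R$, hence $u\equiv 0$ there, and unique continuation for $\Delta^*+\rho\om^2$ propagates this to all of $\R^2\ba\G$.

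The principal obstacle I anticipate is the positivity of the far-field flux for the \emph{generalised} traction $\widetilde T$ rather than for the classical $T$: $\widetilde T$ carries an extra $\widetilde\mu$-weighted G\"unter term in (\ref{Tform2}) that must be shown not to spoil the sign of $\Ima\int\overline u\cdot\widetilde T u\,ds$ as $R\to\infty$. The constraint $\widetilde\lambda+\widetilde\mu=\lambda+\mu$ should be exactly what forces the extra contribution to integrate to zero at leading order (this is the same algebraic reason that $\widetilde T$ yields a symmetric Betti pairing in the closed case used in Section~\ref{sec:3}), but verifying this far-field calculation is the only non-routine step; the tube and endpoint limits are then routine bookkeeping.
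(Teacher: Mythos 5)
Your plan reproduces, in expanded form, exactly the variational Rellich--Kupradze argument the paper invokes by citation (the tube/truncation bookkeeping of \cite{WS90} together with the elastic Rellich lemma \cite[Lemma 2.14]{BHSY}), so the approach matches. The obstacle you flag at the end is not actually an obstacle, and the mechanism is more elementary than the one you suggest: since $\mathcal{M}(\pa,\nu)u=A\pa_\tau u$ with $A$ real and skew-symmetric, integrating by parts on the closed circle $\pa B_R$ gives
\ben
\int_{\pa B_R}\overline{u}\cdot A\pa_\tau u\,ds=-\int_{\pa B_R}\pa_\tau\overline{u}\cdot A u\,ds=\overline{\int_{\pa B_R}\overline{u}\cdot A\pa_\tau u\,ds},
\enn
so the G\"unter contribution to $\int_{\pa B_R}\overline u\cdot\widetilde T u\,ds$ is \emph{real for every} $R$ and drops out of the imaginary flux identically, not merely after averaging as $R\to\infty$. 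The constraint $\widetilde\lambda+\widetilde\mu=\lambda+\mu$ is needed earlier in the argument, to guarantee that $\widetilde T$ is a legitimate conormal for $\Delta^*$ and hence that the first Betti identity holds with a real symmetric form $\mathcal{E}_{\widetilde T}$; it plays no role in the far-field sign.
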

\begin{proof}
The proof of this result can be completed following the variational approach~\cite{WS90} together the generalized Rellich's lemma in elasticity (see~\cite[Lemma 2.14]{BHSY}).
\end{proof}

It is known that~\cite{SW84,WS90} the solutions of the Dirichlet and Neumann elastic problems of scattering by an open-arc admit the representations in forms of single- and double-layer potentials, respectively, i.e.,
\be
\label{DirichletS}
u(x)  = (\mathcal{S}_\omega\phi)(x):=  \int_{\Gamma}\Pi_\omega(x,y)) \phi(y)\,ds_y, \quad \forall\,x\in\R^2\backslash\Gamma,
\en
and
\be
\label{NeumannD}
u(x) = (\mathcal{D}_\omega\psi)(x):= \int_{\Gamma}(\widetilde{T}(\pa_y,\nu_y)\Pi_\omega(x,y))^\top \psi(y)\,ds_y, \quad \forall\,x\in\R^2\backslash\Gamma,
\en
respectively. Then the Dirichlet and Neumann problems reduce to the boundary integral
equations
\be
\label{BIEe}
S_\omega[\phi]=F,\quad N_\omega[\psi]=G \quad\mbox{on}\quad\G.
\en

\begin{definition}
An operator $L$ between two Sobolev spaces is called bicontinuous if it is continuous and invertible. As a corollary, the inverse $L^{-1}$ is also continuous. Assume that $\Gamma\subset\partial\Omega$ where $\partial\Omega$ is a smooth boundary of a bounded domain $\Omega$ in $\R^2$. We denote by $\widetilde{H}^s(\Gamma)$ the space of all $f\in H^s(\Gamma)$ satisfying ${\rm supp}(f)\subseteq\overline{\Gamma}$.
\end{definition}

\begin{lemma}
The operators $S_\omega: \widetilde{H}^{-1/2}(\Gamma)^2\rightarrow H^{1/2}(\Gamma)$ and $N_\omega: \widetilde{H}^{1/2}(\Gamma)^2\rightarrow H^{-1/2}(\Gamma)$ are bicontinuous under the assumption that
\be
\label{bicontinuous-condition}
\widetilde{\mu}\ne-\mu\quad\mbox{and}\quad \widetilde{\mu}\ne \frac{\mu(3\lambda+5\mu)}{\lambda+3\mu}.
\en
\end{lemma}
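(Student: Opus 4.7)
The plan is to prove continuity and invertibility separately, using a Fredholm-theory strategy with the zero-frequency operators as the principal parts and invoking the uniqueness lemma established just above.

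For continuity, I would embed the smooth open arc $\Gamma$ in a smooth closed curve $\partial\Omega$. Since the kernels of $S_\omega$ and $N_\omega$ do not depend on whether the underlying integration curve is open or closed, continuity of $S_\omega$ and $N_\omega$ on $H^{\pm 1/2}(\partial\Omega)^2$ (standard closed-surface mapping properties, as in the paragraph preceding Lemma~\ref{specialcompact}) transfers to the open arc by extension-by-zero and restriction: a function in $\widetilde{H}^{-1/2}(\Gamma)^2$ extends to $H^{-1/2}(\partial\Omega)^2$, the closed-curve $S_\omega$ maps into $H^{1/2}(\partial\Omega)^2$, and restriction to $\Gamma$ lands in $H^{1/2}(\Gamma)^2$; the analogous chain works for $N_\omega$.

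For invertibility of $S_\omega$, I would write $S_\omega=S_0+(S_\omega-S_0)$. The difference has a kernel obtained from $\Pi_\omega-\Pi_0$ which is at most weakly singular (the logarithm is absorbed in $\Pi_0$), so $S_\omega-S_0:\widetilde{H}^{-1/2}(\Gamma)^2\to H^{1/2}(\Gamma)^2$ is compact. The static operator $S_0$ has the leading scalar piece $-C_{\lambda,\mu}^{(1)}\ln|x-y|\mathbb{I}$ with $C_{\lambda,\mu}^{(1)}>0$, which is a matrix-valued Symm operator and is classically positive definite on $\widetilde{H}^{-1/2}(\Gamma)^2$; the remaining rank-one angular kernel $(x-y)(x-y)^\top/|x-y|^2$ defines a compact perturbation in these spaces. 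Thus $S_0$ satisfies a G\r arding inequality, and consequently $S_\omega$ is Fredholm of index zero. Injectivity follows from the uniqueness lemma for the Dirichlet scattering problem applied to the representation (\ref{DirichletS}): if $S_\omega\phi=0$ then $u=\mathcal S_\omega\phi$ solves the homogeneous Dirichlet problem, hence vanishes, and jump relations for the single-layer traction give $\phi=0$. This yields bicontinuity of $S_\omega$.

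For $N_\omega$, the analogous argument requires the regularized representation of Lemma~\ref{lemmaN0}: $N_0[\phi]=\partial_{\tau_x}V_0[\partial_\tau\phi]$. Integration by parts on $\partial\Omega$ and localization to $\Gamma$ reduce coercivity of $N_0$ on $\widetilde{H}^{1/2}(\Gamma)^2$ to coercivity of $V_0$ on $\widetilde{H}^{-1/2}(\Gamma)^2$ evaluated at tangential derivatives, exactly as in the closed-surface theory. The leading logarithmic coefficient of $V_0$ is $C_{\lambda,\mu,\widetilde{\mu}}^{(1)}$, which vanishes precisely in the two cases excluded by hypothesis (\ref{bicontinuous-condition}) (as the remark after Lemma~\ref{lemmaN0} makes explicit: $\widetilde\mu=-\mu$ gives $N_0=0$, and $\widetilde\mu=\mu(3\lambda+5\mu)/(\lambda+3\mu)$ makes the log-coefficient vanish). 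Under (\ref{bicontinuous-condition}) the Symm-like piece dominates, yielding a G\r arding inequality for $N_0$. The difference $N_\omega-N_0$ is compact from $\widetilde{H}^{1/2}(\Gamma)^2$ into $H^{-1/2}(\Gamma)^2$ because, using the representation (\ref{N2}) and subtracting the static counterparts, every resulting kernel is weakly singular or smoother. So $N_\omega$ is Fredholm of index zero, and injectivity comes from the uniqueness lemma applied to the double-layer representation (\ref{NeumannD}), giving bicontinuity.

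The main obstacle will be a rigorous derivation of the G\r arding inequality for $V_0$ (and hence for $N_0$) on an open arc. Unlike the scalar Symm operator, $V_0$ contains both a logarithmic scalar part and the matrix-valued angular kernel $(x-y)(x-y)^\top/|x-y|^2$; showing that the angular part is genuinely a compact perturbation from $\widetilde{H}^{-1/2}(\Gamma)^2$ to $H^{1/2}(\Gamma)^2$ and does not cancel the positive logarithmic part requires care. The standard route is to work first on the straight reference arc $\{x_1\in[-1,1],\,x_2=0\}$ using Fourier/Chebyshev expansions, where the angular kernel simplifies to a bounded multiplier, and then to transfer to general smooth $\Gamma$ by a localization and parametrization argument using that curvature contributes only lower-order, weakly singular corrections.
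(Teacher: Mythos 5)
Your proposal follows the same strategy that the paper relies on---Fredholm-index-zero via coercivity of the static operators plus a compact frequency-dependent correction, with injectivity from the uniqueness lemma---and your key observation that (\ref{bicontinuous-condition}) is precisely the condition forcing the leading logarithmic coefficient $C_{\lambda,\mu,\widetilde{\mu}}^{(1)}$ to be nonzero matches the paper's reasoning exactly. The paper's own proof is only a few lines, deferring the variational machinery to Stephan--Wendland \cite{SW84} for $S_\omega$ and Wendland--Stephan \cite{WS90} for $N_\omega$, so your argument is essentially a spelled-out reconstruction of what the paper outsources to those references.
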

\begin{proof}
The bicontinuity of the operator $S_\omega$ holds analogously to \cite{SW84}. The assumption (\ref{bicontinuous-condition}) of $\widetilde{\mu}$ gives $C_{\lambda,\mu,\widetilde{\mu}}^{(1)}\ne 0$. By the integration kernel of $N_\omega$ together with the singularity decomposition (\ref{dec2}), it can be deduced that the coefficient of the weakly-singular part in the term
\ben
(\mu+\widetilde{\mu})^2A\Pi_\omega(x,y)A+2(\mu+\widetilde{\mu})G_{k_s}(x,y)\mathbb{I}
\enn
is non-zero. Following the proof of the solvability of the hyper-singular operator \cite[(3.6)]{WS90} results into the bicontinuity of the operator $N_\omega$.
\end{proof}

\begin{assumption}
\label{assume}
In the rest of this work, we always assume the value of $\widetilde{\mu}$ to be such that (\ref{bicontinuous-condition}) and additionally, $C_{\lambda,\mu,\widetilde{\mu}}^{(1)}\ln2+C_{\lambda,\mu,\widetilde{\mu}}^{(2)}\ne 0$ hold.
\end{assumption}

Denote by $d=d(x)$ a non-negative smooth function for $x\in\Gamma$ to represent the distance between  $x$ and the corresponding end point for any $x$ in a neighbourhood of each end point. Assuming that the right-hand sides $F, G$ in (\ref{BIEe}) are both infinitely differentiable, it is known~\cite{CDD03} that the density functions $\phi, \psi$ in (\ref{BIEe}) can be expressed in the forms
\be
\label{solsingular}
\phi=\frac{\phi^{\rm{w}}}{w},\quad \psi=w\;\psi^{\rm{w}},
\en
where $w=w(x)$ denotes a smooth function that reproduces the asymptotic $w\sim\sqrt{d}$ as $d\rightarrow 0$. It implies that $w/\sqrt{d}$ is infinitely differentiable up to the endpoints, and the new solutions $\phi^{\mbox{w}}, \psi^{\mbox{w}}$ are smooth up to the end points of $\Gamma$. Taking into account the solution singularities (\ref{solsingular}), we obtain the new boundary integral equations
\be
\label{BIEw}
S_\omega^{\rm{w}}[\phi^{\rm{w}}]=F,\quad N_\omega^{\rm{w}}[\psi^{\rm{w}}]=G \quad\mbox{on}\quad\G,
\en
where the weighted integral operators $S_\omega^{\rm{w}}, N_\omega^{\rm{w}}$ are defined as
\ben
S_\omega^{\rm{w}}[\phi^{\rm{w}}]=S_\omega\left[\frac{\phi^{\rm{w}}}{w}\right],\quad N_\omega^{\rm{w}}[\psi^{\rm{w}}]=N_\omega\left[w\;\psi^{\rm{w}}\right].
\enn
A similar regularized formulation of $N_\omega^{\rm{w}}$ can be obtained from (\ref{N2}) since the weight function $w$ in $N_\omega^{\mbox{w}}$ is smooth boundary-vanishing, see (\ref{Nce2}).

Without loss of generality, suppose that the boundary $\Gamma$ can be parameterized by means of a smooth vector function $x=x(t)=(x_1(t),x_2(t)), t\in[-1,1]$ satisfying $\mathcal{J}(t)=|x'(t)|\ne 0$. Here the prime $'$ denotes the derivative with respect to $t$. Choosing the smooth weighting function $w$ as $w(t)=\sqrt{1-t^2}$ yields
\ben
S_\omega^{\rm{w}}[\phi^{\rm{w}}](x(t))=\int_{-1}^1 \Pi_\omega(x(t),x(\iota)) \frac{\phi^{\rm{w}}(x(\iota))}{\sqrt{1-\iota^2}}\mathcal{J}(\iota)d\iota.
\enn
Then utilizing the changes of variables $t=\cos\theta, \iota=\cos\vartheta$ leads us to
\ben
\widetilde{S}_\omega^{\rm{w}}[\widetilde{\phi}^{\mbox{w}}](\theta)= F(x(\cos\theta)),\quad \widetilde{\phi}^{\rm{w}}(\theta)=\phi^{\mbox{w}}(x(\cos\theta)),
\enn
where the operator $\widetilde{S}_\omega^{\rm{w}}$ is given by
\ben
\widetilde{S}_\omega^{\rm{w}}[\widetilde{\phi}^{\rm{w}}](\theta) = \int_{0}^\pi \Pi_\omega(x(\cos\theta),x(\cos\vartheta)) \widetilde{\phi}^{\rm{w}}(\vartheta)\mathcal{J}(\cos\vartheta)d\vartheta.
\enn
The parameterized form $\widetilde{N}_\omega^{\rm{w}}$ corresponding to the integral operator $N_\omega^{\rm{w}}$ can be deduced in a similar manner.

Note that each smooth $\cos\theta$ dependence function can be extended to be a $2\pi$-periodic and even function. To study the properties of the parameterized operators $\widetilde{S}_\omega^{\rm{w}}, \widetilde{N}_\omega^{\rm{w}}$, we define the following Sobolev spaces:
\begin{definition}
For $s\in\R$, the Sobolev space $H^s_e(2\pi)$ is defined as the completion of space of infinitely differentiable $2\pi$-periodic and even functions defined in the real line with respect to the norm
    \ben
    \|v\|_{H^s_e(2\pi)}^2=|a_0|^2+2\sum_{m\in\Z^+} m^{2s}|a_m|^2,
    \enn
    where $a_m, m\in\Z^+$ denotes the coefficients in the cosine expansion of $v$:
    \ben
    v(\theta)=\frac{1}{2}a_0+\sum_{m\in\Z^+} a_m\cos(m\theta).
    \enn
\end{definition}

\subsection{Calder\'on relation: straight arc}
\label{sec:4.2}

Let $\Gamma$ be, specially, the straight arc $\{x_1\in[-1,1], x_2=0\}$ which means that $x_1(t)=t, x_2(t)=0, t\in[-1,1]$ for $x\in\Gamma$ and $|x'(t)|=1$. In this subsection, we consider the operators $\widetilde{S}_\omega^{\rm{w}}, \widetilde{N}_\omega^{\rm{w}}$ on the straight arc at zero frequency that are denoted by $\widetilde{S}_0, \widetilde{N}_0$, i.e.,
\be
\label{straightS0}
\widetilde{S}_0[\gamma](\theta) = \int_{0}^\pi\left[-C_{\lambda,\mu}^{(1)}\ln|\cos\theta-\cos\vartheta|\mathbb{I}+C_{\lambda,\mu}^{(2)}
\begin{pmatrix}
1 & 0\\
0 & 0
\end{pmatrix}\right] \gamma(\vartheta)d\vartheta,
\en
and
\be
\label{straightN0}
\widetilde{N}_0 = \widetilde{D}_0\widetilde{V}_0\widetilde{T}_0,
\en
where
\ben
\widetilde{D}_0[\gamma](\theta)=\frac{1}{\sin\theta}\frac{d\gamma(\theta)}{d\theta},\quad \widetilde{T}_0[\gamma](\theta)=\frac{d}{d\theta}(\gamma(\theta)\sin\theta),
\enn
\ben
\widetilde{V}_0[\gamma](\theta) = \int_{0}^\pi\left[-C_{\lambda,\mu,\widetilde{\mu}}^{(1)}\ln|\cos\theta-\cos\vartheta|\mathbb{I}+ C_{\lambda,\mu,\widetilde{\mu}}^{(2)}
\begin{pmatrix}
1 & 0\\
0 & 0
\end{pmatrix}\right] \gamma(\vartheta)d\vartheta.
\enn
The expression of $\widetilde{N}_0$ follows immediately from Lemma~\ref{lemmaN0} thanks to the smooth boundary-vanishing weight $w$.

For the basis $\{e_n=\cos n\theta:n\in\N\}$ of $H^s_e(2\pi), s\ge 0$, it can be derived from the diagonal property of Symm's operator~\cite{MH03} that
\be
\label{S0basis}
\widetilde{S}_0[e_n]=\begin{pmatrix}
\lambda_{1,n}^{S} & \\
& \lambda_{2,n}^{S}
\end{pmatrix}e_n,
\en
\ben
\lambda_{1,n}^{S}=\begin{cases}
\pi(C_{\lambda,\mu}^{(1)}\ln2+C_{\lambda,\mu}^{(2)}) , & n=0, \cr
\frac{\pi C_{\lambda,\mu}^{(1)}}{n}, & n\ge 1,
\end{cases}
\quad \lambda_{2,n}^{S}=\begin{cases}
\pi C_{\lambda,\mu}^{(1)}\ln2, & n=0, \cr
\lambda_{1,n}^{S}, & n\ge 1,
\end{cases}
\enn
and
\be
\label{V0basis}
\widetilde{V}_0[e_n]=\begin{pmatrix}
\lambda_{1,n}^{V} & \\
& \lambda_{2,n}^{V}
\end{pmatrix}e_n,
\en
\ben
\lambda_{1,n}^{V}=\begin{cases}
\pi(C_{\lambda,\mu,\widetilde{\mu}}^{(1)}\ln2+C_{\lambda,\mu,\widetilde{\mu}}^{(2)}) , & n=0, \cr
\frac{\pi C_{\lambda,\mu,\widetilde{\mu}}^{(1)}}{n}, & n\ge 1,
\end{cases}
\quad \lambda_{2,n}^{V}=\begin{cases}
\pi C_{\lambda,\mu,\widetilde{\mu}}^{(1)}\ln2, & n=0, \cr
\lambda_{1,n}^{V}, & n\ge 1.
\end{cases}
\enn
For $n=0$, we can obtain from (\ref{V0basis}) that
\ben
\widetilde{N}_0[e_0]= \widetilde{D}_0\widetilde{V}_0\widetilde{T}_0[e_0]= \widetilde{D}_0\widetilde{V}_0[e_1]=\widetilde{D}_0[\lambda_{1,1}^{V}\mathbb{I}e_1] =-\pi C_{\lambda,\mu,\widetilde{\mu}}^{(1)}\mathbb{I}.
\enn
For $n=1$,  it follows that
\ben
\widetilde{N}_0[e_1](\theta) =\widetilde{D}_0\widetilde{V}_0[e_2](\theta) =\widetilde{D}_0[\lambda_{1,2}^{V}\mathbb{I}e_2](\theta) =-2\pi C_{\lambda,\mu,\widetilde{\mu}}^{(1)}\mathbb{I}\cos\theta.
\enn
For $n\ge 2$, note that
\ben
\widetilde{T}_0[e_n](\theta)&=& \cos\theta\cos n\theta-n\sin n\theta\sin\theta\\
&=& \frac{e_{n+1}+e_{n-1}}{2}+ n\frac{e_{n+1}-e_{n-1}}{2}.
\enn
A direct application of (\ref{V0basis}) implies
\ben
\widetilde{V}_0\widetilde{T}_0[e_n]&=& \pi C_{\lambda,\mu,\widetilde{\mu}}^{(1)}\left(\frac{\frac{1}{n+1}e_{n+1}+\frac{1}{n-1}e_{n-1}}{2} +n\frac{\frac{1}{n+1}e_{n+1}-\frac{1}{n-1}e_{n-1}}{2}\right)\mathbb{I}\\
&=& \frac{\pi C_{\lambda,\mu,\widetilde{\mu}}^{(1)}}{2}(e_{n+1}-e_{n-1})
\enn
and then further for $n\ge 2$ we have
\be
\label{N0basis}
\widetilde{N}_0[e_n](\theta)= -\pi C_{\lambda,\mu,\widetilde{\mu}}^{(1)} \left(\frac{\cos\theta\sin n\theta}{\sin\theta} +n\cos n\theta\right)\mathbb{I},
\en
which also holds for $n=1$.

For the operators $\widetilde{D}_0,\widetilde{T}_0$, we have the following basic result, see \cite{LB15}.
\begin{lemma}
The operators $\widetilde{D}_0: H^2_e(2\pi)\rightarrow H^0_e(2\pi)$ and $\widetilde{T}_0: H^2_e(2\pi)\rightarrow H^1_e(2\pi)$ are bounded.
\end{lemma}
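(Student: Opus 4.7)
My plan is to analyse each operator by a direct computation in the cosine basis $\{e_n=\cos(n\theta):n\in\N\}$, aided by elementary trigonometric identities for $\widetilde{T}_0$ and by Hardy's inequality for $\widetilde{D}_0$. The structural input that makes the $1/\sin\theta$ singularity in $\widetilde{D}_0$ manageable is the following: any $\gamma\in H^2_e(2\pi)$ is $C^1$ with $\gamma'$ odd about $0$ and symmetric about $\pi$, hence $\gamma'(0)=\gamma'(\pi)=0$; this vanishing at the zeros of $\sin\theta$ is precisely what a Hardy-type estimate needs.

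For $\widetilde{T}_0$, the product-to-sum identities yield $\widetilde{T}_0[e_0]=e_1$ and
\ben
\widetilde{T}_0[e_n] =\cos\theta\cos(n\theta)-n\sin\theta\sin(n\theta) =\frac{n+1}{2}\,e_{n+1}+\frac{1-n}{2}\,e_{n-1},\quad n\ge 1.
\enn
Writing $\gamma=\tfrac{1}{2}a_0+\sum_{m\ge 1}a_m e_m$ and $\widetilde{T}_0[\gamma]=\tfrac{1}{2}b_0+\sum_{n\ge 1}b_n e_n$, collecting coefficients shows $b_0=0$ and $b_n=\tfrac{n}{2}(a_{n-1}-a_{n+1})$ for $n\ge 1$. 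The bounds $|a_{n-1}-a_{n+1}|^2\le 2(|a_{n-1}|^2+|a_{n+1}|^2)$ and $(m\pm 1)^4\le C m^4$ for $m\ge 1$ then give
\ben
\|\widetilde{T}_0[\gamma]\|^2_{H^1_e(2\pi)} =2\sum_{n\ge 1}n^2|b_n|^2 =\frac{1}{2}\sum_{n\ge 1}n^4|a_{n-1}-a_{n+1}|^2 \le C\|\gamma\|^2_{H^2_e(2\pi)},
\enn
which is the desired bound.

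For $\widetilde{D}_0$, I would pass from cosine coefficients to an integral norm. Parseval applied to even $2\pi$-periodic functions yields $\|f\|^2_{H^0_e(2\pi)}=(4/\pi)\int_0^\pi|f|^2\,d\theta$, so that
\ben
\|\widetilde{D}_0[\gamma]\|^2_{H^0_e(2\pi)} =\frac{4}{\pi}\int_0^\pi\frac{|\gamma'(\theta)|^2}{\sin^2\theta}\,d\theta.
\enn
On $[0,\pi/2]$ the bound $\sin\theta\ge(2/\pi)\theta$ together with Hardy's inequality (applicable because $\gamma'(0)=0$ and $\gamma''\in L^2$) gives $\int_0^{\pi/2}|\gamma'(\theta)|^2/\sin^2\theta\,d\theta\le C\int_0^{\pi/2}|\gamma''(\theta)|^2\,d\theta$; the substitution $\theta=\pi-s$, combined with $\gamma'(\pi)=0$, handles $[\pi/2,\pi]$ in the same way. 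Summing and using Parseval once more, $\int_0^\pi|\gamma''(\theta)|^2\,d\theta=\tfrac{\pi}{2}\sum_{m\ge 1}m^4|a_m|^2\le C\|\gamma\|^2_{H^2_e(2\pi)}$, which completes the estimate.

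The main obstacle is exactly the $1/\sin\theta$ degeneracy at $\theta=0,\pi$ in $\widetilde{D}_0$: it is the evenness-plus-$2\pi$-periodicity built into $H^2_e(2\pi)$ that forces the boundary vanishing of $\gamma'$, and Hardy's inequality then converts this vanishing into a quantitative norm bound; without it, $\widetilde{D}_0[\gamma]$ would fail to land in $H^0_e(2\pi)$. The analysis of $\widetilde{T}_0$, by contrast, is purely algebraic in the cosine basis and encounters no boundary subtleties.
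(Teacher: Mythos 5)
Your proof is correct. Note, however, that the paper does not actually prove this lemma: it simply cites \cite[Lemma 3.2]{LB15} and moves on, so there is no in-paper argument to compare against. Your argument is a valid self-contained one. The $\widetilde{T}_0$ half is a direct coefficient computation (with $b_0=0$ and $b_n=\tfrac{n}{2}(a_{n-1}-a_{n+1})$, which checks out, including the $n=1$ boundary case where $a_0$ carries the $\tfrac12$ factor from $\gamma=\tfrac12 a_0+\sum_{m\ge1}a_m e_m$). The $\widetilde{D}_0$ half correctly isolates the crucial structural facts — that evenness and $2\pi$-periodicity force $\gamma'(0)=\gamma'(\pi)=0$, that $H^2_e(2\pi)\hookrightarrow C^1$ makes these pointwise vanishings meaningful, and that $\sin\theta\ge(2/\pi)\theta$ on $[0,\pi/2]$ (with the symmetric estimate near $\pi$) reduces the degenerate weight to $1/\theta^2$ so that the one-sided Hardy inequality with $\gamma''\in L^2$ closes the estimate. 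The Parseval identity $\|f\|^2_{H^0_e(2\pi)}=(4/\pi)\int_0^\pi|f|^2\,d\theta$ and the final $\int_0^\pi|\gamma''|^2\,d\theta\le\tfrac{\pi}{4}\|\gamma\|^2_{H^2_e(2\pi)}$ are also correct. Whether this is the same mechanism as in \cite{LB15} I cannot say from the paper alone, but as a proof of the stated lemma it is complete and sound.
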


The properties of the operators $\widetilde{S}_0, \widetilde{N}_0, \widetilde{J}_0$ are presented in the following theorem.
\begin{theorem}
For all $s\ge0$, the operators $\widetilde{S}_0, \widetilde{V}_0: H^s_e(2\pi)^2\rightarrow H^{s+1}_e(2\pi)^2$, $\widetilde{N}_0: H^{s+1}_e(2\pi)^2\rightarrow H^s_e(2\pi)^2$ and $\widetilde{J}_0=\widetilde{N}_0\widetilde{S}_0: H^s_e(2\pi)^2\rightarrow H^s_e(2\pi)^2$ are all bicontinuous.
\end{theorem}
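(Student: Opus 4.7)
My plan is to exploit the explicit matrix representations of the four operators in the cosine basis $\{e_n\}_{n\ge 0}$: $\widetilde S_0, \widetilde V_0$ are diagonal, $\widetilde N_0$ is upper-triangular with controlled off-diagonal entries, and $\widetilde J_0$ arises by composition.

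\textbf{Step 1: $\widetilde S_0$ and $\widetilde V_0$.} Formulas (\ref{S0basis}) and (\ref{V0basis}) show that both operators are diagonal on the vectorial basis $\{e_n\mathbf{e}_j\}_{n\ge 0,\,j=1,2}$. Under Assumption \ref{assume} together with (\ref{bicontinuous-condition}), every eigenvalue $\lambda^S_{j,n},\lambda^V_{j,n}$ is non-zero and satisfies $|\lambda^S_{j,n}|\asymp|\lambda^V_{j,n}|\asymp(1+n)^{-1}$. Reading this against the weighted definition of the $H^s_e(2\pi)$-norm gives immediately two-sided operator bounds, so both maps are bicontinuous isomorphisms $H^s_e(2\pi)^2\to H^{s+1}_e(2\pi)^2$ for every $s\ge 0$.

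\textbf{Step 2: $\widetilde N_0$.} Expanding (\ref{N0basis}) together with the cases $n=0,1$ via the identity $U_k(\cos\theta)=\sin((k+1)\theta)/\sin\theta$ yields the explicit matrix entries
\[
[\widetilde N_0]_{m,n}=-\pi C^{(1)}_{\lambda,\mu,\widetilde\mu}\cdot
\begin{cases}
n+1,&m=n,\\
2,&0<m<n,\ m\equiv n\pmod 2,\\
1,&m=0,\ n\text{ even},\\
0,&\text{otherwise},
\end{cases}
\]
which is upper-triangular with diagonal $\sim(n+1)$ and constant off-diagonal entries within each parity block. For the boundedness $\widetilde N_0:H^{s+1}_e(2\pi)^2\to H^s_e(2\pi)^2$ I would decompose $\widetilde N_0 v$ (with $v=\sum a_n e_n$) into the diagonal contribution $(m+1)a_m$---which contributes exactly $\|v\|^2_{H^{s+1}_e}$---and the tail sum $T_m=\sum_{n>m,\,n\equiv m\!\!\pmod 2}a_n$, and estimate the latter by the Schur test with multiplier $\phi(k)=(1+k)^{-\epsilon}$, $\epsilon\in(0,1)$, applied to the kernel $K(m,n)=(1+m)^s(1+n)^{-s-1}\mathbf{1}_{\{n>m,\,n\equiv m\!\!\pmod 2\}}$; both marginals $\sum_n K(m,n)\phi(n)$ and $\sum_m K(m,n)\phi(m)$ remain uniformly bounded for all $s\ge 0$.

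For invertibility, the elementary identity $f_{m+2}-f_m=-\pi C^{(1)}(m+1)(a_{m+2}-a_m)$ for $m\ge 1$---obtained by subtracting consecutive rows and using $T_m-T_{m+2}=a_{m+2}$---combines with $a_n\to 0$ to yield, after telescoping and a rearrangement, the explicit inverse
\[
a_m=-\frac{f_m}{\pi C^{(1)}(m+1)}+\sum_{j\ge 1}\frac{2\,f_{m+2j}}{\pi C^{(1)}(m+2j-1)(m+2j+1)},\qquad m\ge 1,
\]
while $a_0=-f_0/(\pi C^{(1)})-\sum_{n\ge 2,\,n\text{ even}}a_n$ is recovered from the first row. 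Continuity of this inverse $H^s_e(2\pi)^2\to H^{s+1}_e(2\pi)^2$ reduces once more to a Schur-test bound, this time on a kernel whose marginals are convergent $k^{-2}$-type integrals. Finally, $\widetilde J_0=\widetilde N_0\widetilde S_0:H^s_e(2\pi)^2\to H^s_e(2\pi)^2$ is bicontinuous by composing the bicontinuous maps from Steps 1 and 2.

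\textbf{Main obstacle.} The substantive difficulty is Step 2. The off-diagonal entries of $\widetilde N_0$ are uniform constants ($-2\pi C^{(1)}$) on each parity class, so the tail operator $T_m$ is not a bounded (let alone compact) perturbation of the diagonal multiplication $a_m\mapsto(m+1)a_m$ in any obvious functional setting: naive Cauchy--Schwarz estimates produce a logarithmically divergent tail. The boundedness therefore requires the Schur test with a carefully chosen non-trivial multiplier, and the invertibility hinges on the algebraic cancellation $f_{m+2}-f_m\propto(m+1)(a_{m+2}-a_m)$, an identity that is an algebraic consequence of the factorization $\widetilde N_0=\widetilde D_0\widetilde V_0\widetilde T_0$ of (\ref{straightN0}) and would fail for a generic upper-triangular matrix with the same diagonal.
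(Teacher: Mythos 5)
Your proof is correct and takes a genuinely different route than the paper. The paper's argument is structured in the opposite order: it proves bicontinuity of $\widetilde{J}_0$ first and then deduces bicontinuity of $\widetilde{N}_0 = \widetilde{J}_0\widetilde{S}_0^{-1}$ at the end. Continuity of $\widetilde{J}_0$ is obtained by rewriting its action on the basis in the form
$\widetilde{W}_0[\gamma] = \lambda_3^J\mathbb{I}\gamma + \lambda_3^J\cos\theta\,\mathbb{I}\,\widetilde{C}[\gamma] + (\text{rank-one correction})$,
where $\widetilde{C}$ is the averaging operator (\ref{operatorC}) imported from Lintner--Bruno, whose continuity $H^s_e(2\pi)\to H^s_e(2\pi)$ is quoted rather than reproved. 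Invertibility of $\widetilde{J}_0$ is then established by exhibiting $\widetilde{I}_0 = -\bigl(\pi^2(C^{(1)}_{\lambda,\mu,\widetilde\mu})^2\bigr)^{-1}\widetilde{S}_0^{-1}\widetilde{C}\widetilde{V}_0\widetilde{T}_0$ and verifying $\widetilde{J}_0\widetilde{I}_0 = \widetilde{I}_0\widetilde{J}_0 = I$ on basis elements via the algebraic relations $\widetilde{C}\widetilde{T}_0 = I$, $\widetilde{T}_0\widetilde{C}[e_n]=e_n$ $(n\ge1)$, $\widetilde{D}_0\mathbb{I} = -\pi^2(C^{(1)})^2\widetilde{C}(\widetilde{V}_0^{-1})^2$, then extending by density.

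Your approach attacks $\widetilde{N}_0$ directly at the level of its matrix in the cosine basis. This is more self-contained — you avoid importing the continuity of $\widetilde{C}$ — at the cost of proving the Schur estimates by hand, and you correctly identify that naive Cauchy--Schwarz on the tail $T_m$ loses a logarithm, so the multiplier $(1+k)^{-\epsilon}$ in the Schur test is essential. Your telescoping identity $f_{m+2}-f_m = -\pi C^{(1)}(m+1)(a_{m+2}-a_m)$ is exactly the matrix-level manifestation of the paper's identity $\widetilde{C}\widetilde{T}_0 = I$ acting inside the factorization $\widetilde{N}_0=\widetilde{D}_0\widetilde{V}_0\widetilde{T}_0$, and your Abel-summation inversion is the component-wise version of the paper's formula (\ref{inverseJ}) with $\widetilde{S}_0^{-1}$ stripped off. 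One bookkeeping point to fix: with the convention $v=\tfrac12 a_0 + \sum_{m\ge1}a_m\cos m\theta$ that defines the $H^s_e$-norm, the $m=0$ row entries of $\widetilde{N}_0$ carry an extra factor, so your first-row recovery should read $a_0 = -f_0/(\pi C^{(1)}) - 2\sum_{n\ge 2,\,n\ \mathrm{even}}a_n$ rather than with coefficient $1$; this does not affect the bicontinuity conclusion, since the $m=0$ row contributes only a rank-one correction.
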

\begin{proof}
\begin{itemize}
\item Bicontinuity of $\widetilde{S}_0, \widetilde{V}_0$:
\end{itemize}

For $f=\sum_{n\in\N} f_ne_n\in H^s_e(2\pi)^2$, let the operators $W_S,W_V$ be defined as
\ben
W_S[f]=\sum_{n\in\N} \begin{pmatrix}
\lambda_{1,n}^{S} & \\
& \lambda_{2,n}^{S}
\end{pmatrix}f_ne_n,\quad W_V[f]=\sum_{n\in\N} \begin{pmatrix}
\lambda_{1,n}^{V} & \\
& \lambda_{2,n}^{V}
\end{pmatrix}f_ne_n.
\enn
Under the assumption (\ref{assume}), it follows that $W_S,W_V: H^s_e(2\pi)^2\rightarrow H^{s+1}_e(2\pi)^2$ are bicontinuous for all $s\ge 0$. In particular, $W_S,W_V$ are continuous from $H^0_e(2\pi)^2$ into $H^0_e(2\pi)^2$. For every basis element $e_n,n\in\N$, the operators $\widetilde{S}_0,\widetilde{V}_0$ coincide with $W_S,W_V$, respectively. Then $\widetilde{S}_0$ (resp. $\widetilde{V}_0$) and $W_S$ (resp. $W_V$) coincide on the dense set $\{e_n\}$ of $H^0_e(2\pi)^2$ and thus, coincide throughout $H^0_e(2\pi)^2$. Then the bicontinuity of $\widetilde{S}_0, \widetilde{V}_0: H^s_e(2\pi)^2\rightarrow H^{s+1}_e(2\pi)^2$ follows immediately.

\begin{itemize}
\item Bicontinuity of $\widetilde{J}_0$:
\end{itemize}

We first show that $\widetilde{J}_0: H^s_e(2\pi)^2\rightarrow H^s_e(2\pi)^2$ is continuous. Obviously, $\widetilde{J}_0: H^1_e(2\pi)^2\rightarrow H^0_e(2\pi)^2$ is continuous since $\widetilde{D}_0: H^2_e(2\pi)\rightarrow H^0_e(2\pi)$ \cite[Lemma 3.2]{LB15} and $\widetilde{T}_0: H^2_e(2\pi)\rightarrow H^1_e(2\pi)$ are continuous. Combining the relations (\ref{S0basis}) and (\ref{N0basis}) gives
\be
\label{Jbasis}
\widetilde{J}_0[e_n](\theta) =\begin{cases}
\begin{pmatrix}
\lambda_1^{J} & \\
& \lambda_2^{J}
\end{pmatrix}, & n=0, \cr
\lambda_3^{J} \left(\frac{\cos\theta\sin n\theta}{n\sin\theta} +\cos n\theta\right)\mathbb{I}, & n\ge1,
\end{cases}
\en
where $\lambda_1^{J}=-\pi^2 C_{\lambda,\mu,\widetilde{\mu}}^{(1)}(C_{\lambda,\mu}^{(1)}\ln2+C_{\lambda,\mu}^{(2)})$, $\lambda_2^{J}=-\pi^2 C_{\lambda,\mu,\widetilde{\mu}}^{(1)}C_{\lambda,\mu}^{(1)}\ln2$ and $\lambda_3^{J}=-\pi^2 C_{\lambda,\mu,\widetilde{\mu}}^{(1)}C_{\lambda,\mu}^{(1)}$ for $n\ge 1$. Let the operator $\widetilde{C}$ be given by
\be
\label{operatorC}
\widetilde{C}[\gamma](\theta)= \frac{\theta(\pi-\theta)}{\pi\sin\theta}\left[ \frac{1}{\theta}\int_0^\theta\gamma(s)ds -\frac{1}{\pi-\theta}\int_\theta^\pi\gamma(s)ds \right].
\en
It has been proved in ~\cite[Lemma 3.5]{LB15} that the operator $\widetilde{C}: H^s_e(2\pi)\rightarrow H^s_e(2\pi)$ is continuous and
\be
\label{Cbasis}
\widetilde{C}[e_n](\theta) =\begin{cases}
0, & n=0, \cr
\frac{\sin n\theta}{n\sin\theta}, & n\ge1.
\end{cases}
\en
Then we can rewrite $\widetilde{J}_0[e_n]$ as
\ben
\widetilde{J}_0[e_n]=\widetilde{W}_0[e_n],
\enn
where the integral operator $\widetilde{W}_0$ is defined as
\ben
\widetilde{W}_0[\gamma](\theta)= \lambda_3^{J}\mathbb{I}\gamma(\theta) +\lambda_3^{J}\cos\theta\mathbb{I}\widetilde{C}[\gamma](\theta) + \frac{1}{\pi}\begin{pmatrix}
\lambda_1^{J}-\lambda_3^{J} & \\
& \lambda_2^{J}-\lambda_3^{J}
\end{pmatrix}\int_0^\pi\gamma(s)ds.
\enn
The continuity of $\widetilde{W}_0: H^s_e(2\pi)\rightarrow H^s_e(2\pi)$ deduces from the continuity of $\widetilde{C}: H^s_e(2\pi)\rightarrow H^s_e(2\pi)$ for all $s\ge 0$ and therefore the continuity of $\widetilde{J}_0: H^s_e(2\pi)\rightarrow H^s_e(2\pi)$ results.

Next we prove the invertibility of $\widetilde{J}_0$ and its inverse $\widetilde{J}_0^{-1}: H^s_e(2\pi)^2\rightarrow H^s_e(2\pi)^2$ is given by
\be
\label{inverseJ}
\widetilde{J}_0^{-1}=-\left(\pi^2 (C_{\lambda,\mu,\widetilde{\mu}}^{(1)})^2\right)^{-1} \widetilde{S}_0^{-1}\widetilde{C}\widetilde{V}_0\widetilde{T}_0,
\en
for $s\ge 2$ and given by the unique continuous extension of the right-hand side of (\ref{inverseJ}) for $0\le s<2$. On one hand, it easily follows from~\cite[Corollary 3.13]{LB15} that $\widetilde{C}\widetilde{V}_0\widetilde{T}_0: H^{s}_e(2\pi)^2\rightarrow H^{s}_e(2\pi)^2, s\ge 1$ is continuous and it can be extended in a unique style to an operator which is continuous from $H^{s}_e(2\pi)^2$ to $H^{s+1}_e(2\pi)^2$ for all $s\ge 0$. For $n=0$, $\widetilde{D}_0[e_0]=0$. For $n\ge 1$,
\ben
\widetilde{D}_0[e_n]=-n\frac{\sin n\theta}{\sin\theta}.
\enn
Note that for $n\ge 1$,
\ben
\widetilde{V}_0[e_n]=\lambda_{1,n}^{V}\mathbb{I}[e_n],\quad \widetilde{C}[e_n]=\frac{\sin n\theta}{n\sin\theta}.
\enn
We conclude from the density of the basis $\{e_n\}$ in $H^s_e(2\pi)$ that
\ben
\widetilde{D}_0\mathbb{I}= -\pi^2 (C_{\lambda,\mu,\widetilde{\mu}}^{(1)})^2 \widetilde{C}(\widetilde{V}_0^{-1})^2.
\enn
This means that $\widetilde{D}_0: H^{s+2}_e(2\pi)\rightarrow H^{s}_e(2\pi), s\ge 0$ is continuous. Denote
\ben
\widetilde{I}_0=-\left(\pi^2 (C_{\lambda,\mu,\widetilde{\mu}}^{(1)})^2\right)^{-1}\widetilde{S}_0^{-1}\widetilde{C}\widetilde{V}_0\widetilde{T}_0.
\enn
Then
\ben
\widetilde{J}_0\widetilde{I}_0=\widetilde{C}\widetilde{V}_0^{-1}\widetilde{V}_0^{-1} \widetilde{V}_0\widetilde{T}_0\widetilde{S}_0 \widetilde{S}_0^{-1}\widetilde{C}\widetilde{V}_0\widetilde{T}_0 =\widetilde{C}\widetilde{V}_0^{-1} \widetilde{T}_0\widetilde{C}\widetilde{V}_0\widetilde{T}_0
\enn
with regularity
\ben
H^{s}_e(2\pi)^2\xrightarrow{\widetilde{C}\widetilde{V}_0\widetilde{T}_0} H^{s+1}_e(2\pi)^2 \xrightarrow{\widetilde{T}_0} H^{s}_e(2\pi)^2 \xrightarrow{\widetilde{V}_0^{-1}} H^{s-1}_e(2\pi)^2 \xrightarrow{\widetilde{C}} H^{s-1}_e(2\pi)^2, \quad s\ge 1.
\enn
Noting that $\widetilde{T}_0[e_0]=e_1$, $\widetilde{C}[e_1]=e_0$ and $\widetilde{T}_0\widetilde{C}[e_n]=e_n$ for all $n\ge 1$, we obtain $\widetilde{J}_0\widetilde{I}_0=I$ which means that $\widetilde{I}_0$ is a right inverse of $\widetilde{J}_0$ for $s\ge 1$. On the other hand, for $s\ge 2$,
\ben
\widetilde{I}_0\widetilde{J}_0=\widetilde{S}_0^{-1}\widetilde{C}\widetilde{V}_0\widetilde{T}_0 \widetilde{C}\widetilde{V}_0^{-1}\widetilde{V}_0^{-1} \widetilde{V}_0\widetilde{T}_0\widetilde{S}_0 =\widetilde{S}_0^{-1}\widetilde{C}\widetilde{V}_0\widetilde{T}_0 \widetilde{C}\widetilde{V}_0^{-1}\widetilde{T}_0\widetilde{S}_0,
\enn
Using the same argument for right inverse demonstration, it can be obtained that
\ben
\widetilde{I}_0\widetilde{J}_0=\widetilde{S}_0^{-1}\widetilde{C}\widetilde{T}_0\widetilde{S}_0.
\enn
Definition of the operators $\widetilde{C}, \widetilde{T}_0$ yields $\widetilde{C}\widetilde{T}_0[f]=f$ for all $f\in H^{s}_e(2\pi)$. Thus, $\widetilde{I}_0\widetilde{J}_0=I$ which means that $\widetilde{I}_0$ is a left inverse of $\widetilde{J}_0$ for $s\ge 2$. The continuity of $\widetilde{I}_0: H^s_e(2\pi)^2\rightarrow H^s_e(2\pi)^2$ for $s\ge 2$, which can be extended in a unique style as a continuous operator for $s\ge 0$, follows immediately from the continuity of $\widetilde{S}_0$ and $\widetilde{C}\widetilde{V}_0\widetilde{T}_0$. Therefore, the relations $\widetilde{J}_0\widetilde{I}_0=I$ and $\widetilde{I}_0\widetilde{J}_0=I$ can be extended to the case of $s\ge 0$ due to the density of $H^2_e(2\pi)$ in $H^s_e(2\pi)$, $0\le s<2$. This completes the proof of the bicontinuity of $\widetilde{J}_0$.

\begin{itemize}
\item Bicontinuity of $\widetilde{N}_0$:
\end{itemize}

The bicontinuity of $\widetilde{N}_0: H^{s+1}_e(2\pi)^2\rightarrow H^s_e(2\pi)^2$ results from the bicontinuity of $\widetilde{S}_0,\widetilde{J}_0$ and $\widetilde{N}_0=\widetilde{J}_0\widetilde{S}_0^{-1}$.
\end{proof}

\begin{theorem}
\label{spectrumJ0}
For all $s>0$, the point spectrum $\sigma_s$ of $\widetilde{J}_0$ can be expressed as the union
\ben
\sigma_s=\Lambda_s\cup\Lambda_\infty,
\enn
where $\Lambda_s$ is the open bounded set
\ben
\Lambda_s=\left\{\leftthreetimes=-\lambda_3^{J}(\lambda_x+i\lambda_y)\in\C: s+\frac{1}{2}<\frac{-(\lambda_x+1)}{(\lambda_x+1)^2+\lambda_y^2}, \lambda_x+1< 0\right\},
\enn
and $\Lambda_\infty$ is the discrete set
\ben
\Lambda_\infty=\left\{\lambda_1^{J}, \lambda_2^{J}, \lambda_3^{J}+\frac{\lambda_3^{J}}{n}:n\in\N\right\}.
\enn
Moreover, $\sigma_s$ is bounded away from zero and infinity.
\end{theorem}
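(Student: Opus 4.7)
The plan is to reduce the vector eigenvalue problem for $\widetilde{J}_0$ to two scalar problems via block-diagonalization, then recast each scalar problem as a first-order linear ODE whose explicit solution structure exhibits both the continuous piece $\Lambda_s$ and the discrete set $\Lambda_\infty$. Because the $n\ge 1$ blocks in (\ref{Jbasis}) are proportional to the $2\times 2$ identity matrix, the operator $\widetilde{W}_0$ from the previous proof (which coincides with $\widetilde{J}_0$) preserves the two vector components of $f=(f^{(1)},f^{(2)})^\top$ and acts on each by a scalar operator
\ben
\widetilde{J}_0^{(i)}[f] = \lambda_3^{J}\, f + \lambda_3^{J}\cos\theta\,\widetilde{C}[f] + \frac{\lambda_i^{J}-\lambda_3^{J}}{\pi}\int_0^{\pi}\! f(s)\,ds, \quad i=1,2 .
\enn
Consequently $\sigma_s=\sigma(\widetilde{J}_0^{(1)})\cup\sigma(\widetilde{J}_0^{(2)})$, and I henceforth work on $H^s_e(2\pi)$.

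Given $(\kappa,f)$ with $\widetilde{J}_0^{(i)}f=\kappa f$, I set $\alpha=(1/\pi)\int_0^\pi f$ and $g=\widetilde{C}[f]$. From (\ref{Cbasis}) and the direct checks $\widetilde{T}_0\widetilde{C}[e_n]=e_n$ for $n\ge 1$ and $\widetilde{T}_0\widetilde{C}[e_0]=0$, the identity $\widetilde{T}_0\widetilde{C}[f]=f-\alpha$ holds, so $f=(g\sin\theta)'+\alpha$. Substituting into the scalar eigenvalue equation reduces it on $(0,\pi)$ to the first-order linear ODE
\ben
(\kappa-\lambda_3^{J})\, g'(\theta)\sin\theta + (\kappa-2\lambda_3^{J})\, g(\theta)\cos\theta = (\lambda_i^{J}-\kappa)\,\alpha ,
\enn
whose homogeneous solution is $g=A|\sin\theta|^{\beta}$ with $\beta=(2\lambda_3^{J}-\kappa)/(\kappa-\lambda_3^{J})$. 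This yields the eigenfunction template $f(\theta)=A(\beta+1)|\sin\theta|^{\beta}\cos\theta+\alpha$, and balancing the constant term in the eigenvalue equation forces one of two regimes: either (i) $\alpha=0$, with arbitrary admissible $\beta$ and $\kappa=\lambda_3^{J}(\beta+2)/(\beta+1)$, or (ii) $\kappa=\lambda_i^{J}$, with a constant eigenfunction possibly superimposed with a homogeneous contribution.

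The third step is a Sobolev endpoint analysis: $|\sin\theta|^{\beta}\cos\theta$ lies in $H^s_e(2\pi)$ precisely when ${\rm Re\,}\beta>s-1/2$, because its endpoint singularity at $\theta=0,\pi$ is $|\theta|^\beta$. Parametrizing $\kappa=-\lambda_3^{J}(\lambda_x+i\lambda_y)$ inverts the M\"obius relation between $\kappa$ and $\beta$ to $\beta+1=-1/((\lambda_x+1)+i\lambda_y)$; taking real parts rewrites the admissibility condition exactly as the inequalities defining $\Lambda_s$. Non-negative integer exponents $\beta=n-1$ produce the eigenvalues $\lambda_3^{J}(1+1/n)$: for odd $n$ the template is the smooth polynomial $\sin^{n-1}\theta\cos\theta$, whereas for even $n$ the template is only weakly singular, so one combines it with the particular solution and the constant $\alpha$---in the manner of a Jordan-type resonance---to produce a polynomial eigenfunction of even degree in $\cos\theta$; either way $\lambda_3^{J}(1+1/n)\in\sigma_s$ for every $s$. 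Regime (ii) contributes the eigenvalues $\lambda_1^{J}$ and $\lambda_2^{J}$ via constant eigenfunctions, completing $\Lambda_\infty$. Boundedness away from $0$ and $\infty$ reads off the explicit formulas: Assumption~\ref{assume} ensures $\lambda_1^{J},\lambda_2^{J},\lambda_3^{J}\ne 0$, the sequence $\lambda_3^{J}(1+1/n)$ accumulates at the nonzero point $\lambda_3^{J}$, and $\Lambda_s$ is the image under $w\mapsto-\lambda_3^{J}w$ of an open disk of radius $1/(2s+1)$ in the left half-plane, hence is bounded and separated from the origin.

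The main obstacle will be the completeness direction---showing that \emph{every} eigenfunction arises from the above ODE classification. This reduces to verifying that the substitution $f\mapsto g=\widetilde{C}[f]$ together with the reconstruction $f=\widetilde{T}_0 g+\alpha$ is a bijection on the relevant Sobolev scale, so that the one-dimensional ODE solution space truly exhausts all eigenfunctions, and to a careful treatment of the integer-$\beta$ resonance in which the singular template must be superimposed with the particular solution to yield a bona fide polynomial eigenfunction of $\widetilde{J}_0^{(i)}$.
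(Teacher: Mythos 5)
Your proposal re-derives from scratch what the paper obtains by citation: the paper's proof reduces (\ref{Jbasis}) to the form (\ref{Jbasis1}) so that, for $n\ge 1$, $\widetilde{J}_0$ acts as a scalar multiple of $\mathbb{I}$ by the operator $\widetilde{j}_0$, and then simply quotes the acoustic spectral result \cite[Lemma~3.16]{LB15}. You instead give a self-contained argument, but the underlying mechanism---block-diagonalization into two scalar operators $\widetilde{J}_0^{(i)}$, passage to $g=\widetilde{C}[f]$ and the reconstruction $f=(g\sin\theta)'+\alpha$, the resulting first-order ODE, the homogeneous solution $|\sin\theta|^\beta$ with $\beta=(2\lambda_3^J-\kappa)/(\kappa-\lambda_3^J)$, the M\"obius inversion giving $\beta+1=-1/((\lambda_x+1)+i\lambda_y)$, and the endpoint Sobolev threshold ${\rm Re}\,\beta>s-1/2$---is precisely the argument behind the cited lemma. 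All the computations you carry out are correct and reproduce $\Lambda_s$ and $\Lambda_\infty$ exactly, and the boundedness claim follows from Assumption~\ref{assume} as you say. What you gain is self-containedness; what the paper gains is brevity.

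Two points remain underdeveloped. First, the completeness direction $\sigma_s\subseteq\Lambda_s\cup\Lambda_\infty$, which you flag: the dichotomy ``(i) $\alpha=0$ or (ii) $\kappa=\lambda_i^J$'' is obtained by balancing the constant term under the \emph{ansatz} that $g$ is a purely homogeneous ODE solution. To close the argument one must rule out the case $\alpha\ne0$, $\kappa\notin\{\lambda_1^J,\lambda_2^J\}\cup\{\lambda_3^J(1+1/n)\}$, in which the full solution $g=A|\sin\theta|^\beta+g_p$ with $g_p\propto\sin^\beta\theta\int\sin^{-\beta-1}\vartheta\,d\vartheta$ could a priori produce an additional eigenfunction; a short parity/endpoint argument (the particular solution cannot simultaneously be even and lie in $H^s_e$ unless $\beta$ is a non-negative integer, in which case it collapses to the polynomial cases you already cover) is needed. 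Second, the statement ``$|\sin\theta|^\beta\cos\theta$ lies in $H^s_e(2\pi)$ precisely when ${\rm Re}\,\beta>s-1/2$'' is not literally correct for even non-negative integers $\beta$, where the template is a trigonometric polynomial and belongs to all $H^s_e$; your separate treatment of integer $\beta$ via $\Lambda_\infty$ makes this harmless, but the ``precisely'' should be qualified.
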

\begin{proof}
Rewriting (\ref{Jbasis}) gives
\be
\label{Jbasis1}
\widetilde{J}_0[e_n] =\begin{cases}
\begin{pmatrix}
\lambda_1^{J} & \\
& \lambda_2^{J}
\end{pmatrix}, & n=0, \cr
-4\lambda_3^{J} \widetilde{j}_0[e_n]\mathbb{I}, & n\ge1,
\end{cases}
\en
where
\ben
\widetilde{j}_0[e_n](\theta)=-\frac{\sin(n+1)\theta}{4n\sin\theta}+\frac{\cos n\theta}{4n} -\frac{\cos n\theta}{4},\quad n\ge 1.
\enn
Then the point spectrum of $\widetilde{J}_0$ results from \cite[Lemma 3.16]{LB15}. Utilizing the polar coordinates $(r,\theta)$ around $(-1,0)$ the set $\Lambda_s$ can be rewritten as
\ben
\Lambda_s=\left\{\leftthreetimes=-\lambda_3^{J}(\lambda_x+i\lambda_y)\in\C: \lambda_x+1=r\cos\theta, \lambda_y=r\sin\theta, 0<r<-\frac{\cos\theta}{s+\frac{1}{2}}, \frac{\pi}{2}<\theta<\frac{3\pi}{2}\right\},
\enn
satisfying $\cap_{s>0}\lambda_s=\emptyset$ and $\cap_{s>0}\overline{\lambda_s}=\{\lambda_3^{J}\}$. Additionally, $\mbox{dist}(\sigma_s,0)=|\lambda_3^{J}|$ and
\ben
\max_{\lambda\in\sigma_s}|\leftthreetimes|=\max\{|\lambda_1^{J}|, |\lambda_2^{J}|, 3|\lambda_3^{J}|\}.
\enn
\end{proof}

\begin{remark}
\label{equivalence}
We point out in this remark that $\lambda_3^{J}=-\frac{1}{4}+(\widetilde{C}_{\lambda,\mu,\widetilde{\mu}})^2$ implying that the clustered point of the point spectrum of the Calder\'on formula $\widetilde{J}_0=\widetilde{N}_0\widetilde{S}_0$ (in the case of straight arc) is equivalent to the accumulation point of the eigenvalues of the Calder\'on formula in the closed-surface case (Theorem~\ref{main1}). In fact, on one hand,
\ben
\lambda_3^{J} &=& -\pi^2 C_{\lambda,\mu,\widetilde{\mu}}^{(1)}C_{\lambda,\mu}^{(1)}\\
&=& \frac{(\widetilde{\mu}+\mu) [(\lambda+\mu)(\widetilde{\mu}-3\mu)+2\mu(\widetilde{\mu}-\mu)]}{4\mu(\lambda+2\mu)} \frac{\lambda+3\mu}{4\mu(\lambda+2\mu)}.
\enn
On the other hand,
\ben
-\frac{1}{4}+(\widetilde{C}_{\lambda,\mu,\widetilde{\mu}})^2 = \left(\widetilde{C}_{\lambda,\mu,\widetilde{\mu}}-\frac{1}{2}\right) \left(\widetilde{C}_{\lambda,\mu,\widetilde{\mu}}+\frac{1}{2}\right).
\enn
Then the following direct evaluations
\ben
\widetilde{C}_{\lambda,\mu,\widetilde{\mu}}-\frac{1}{2} = \frac{2\mu\widetilde{\mu}+(\lambda+\mu)(\widetilde{\mu}-\mu)-2\mu(\lambda+2\mu)}{4\mu(\lambda+2\mu)} = \frac{(\lambda+\mu)(\widetilde{\mu}-3\mu)+2\mu(\widetilde{\mu}-\mu)}{4\mu(\lambda+2\mu)},
\enn
and
\ben
\widetilde{C}_{\lambda,\mu,\widetilde{\mu}}+\frac{1}{2} = \frac{2\mu\widetilde{\mu}+(\lambda+\mu)(\widetilde{\mu}-\mu)+2\mu(\lambda+2\mu)}{4\mu(\lambda+2\mu)} = \frac{(\widetilde{\mu}+\mu)(\lambda+3\mu)}{4\mu(\lambda+2\mu)},
\enn
indicate the argument.
\end{remark}

\subsection{Calder\'on relation: general open-surface}
\label{sec:4.3}

For a general smooth open-arc $\Gamma$, combining the Jacobian $\mathcal{J}(\theta)=|x'(\cos\theta)|$ and the operators $\widetilde{S}_0, \widetilde{N}_0$, we introduce the operators
\ben
\widetilde{S}_0^\mathcal{J}=\widetilde{S}_0\widetilde{Z}_0, \quad \widetilde{N}_0^\mathcal{J}=\widetilde{Z}_0^{-1}\widetilde{N}_0
\enn
and $\widetilde{J}_0^\mathcal{J}=\widetilde{N}_0^\mathcal{J}\widetilde{S}_0^\mathcal{J}$, where the operator $\widetilde{Z}_0$ is given by $\widetilde{Z}_0[\gamma](\theta)=\gamma(\theta)\mathcal{J}(\theta)$. Note that $\widetilde{J}_0^{\mathcal{J}}=\widetilde{Z}_0^{-1}\widetilde{J}_0\widetilde{Z}_0$, the following results hold.

\begin{corollary}
\label{corollaryJ}
For all $s\ge0$, the operators $\widetilde{S}_0^\mathcal{J}: H^s_e(2\pi)^2\rightarrow H^{s+1}_e(2\pi)^2$,  $\widetilde{N}_0^\mathcal{J}: H^{s+1}_e(2\pi)^2\rightarrow H^s_e(2\pi)^2$ and $\widetilde{J}_0^\mathcal{J}: H^s_e(2\pi)^2\rightarrow H^s_e(2\pi)^2$ are all bicontinuous. Moreover, the point spectrum of $\widetilde{J}_0^\mathcal{J}$ is equivalent to the point spectrum of $\widetilde{J}_0$ for $s>0$.
\end{corollary}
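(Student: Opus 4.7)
The plan is to observe that the three operators in question are obtained from those in Theorem 4.3 by pre- or post-composition with the multiplication operator $\widetilde{Z}_0$, and in particular $\widetilde{J}_0^{\mathcal{J}}=\widetilde{Z}_0^{-1}\widetilde{J}_0\widetilde{Z}_0$ is a similarity transform of $\widetilde{J}_0$ by a bicontinuous operator. Hence everything reduces to the multiplier properties of $\widetilde{Z}_0$.

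First I would verify that $\widetilde{Z}_0: H^s_e(2\pi)^2\rightarrow H^s_e(2\pi)^2$ is bicontinuous for every $s\ge 0$. The parametrization $x(t)=(x_1(t),x_2(t))$ is smooth and regular ($|x'(t)|\neq 0$), so $\mathcal{J}(\theta)=|x'(\cos\theta)|$ is a smooth $2\pi$-periodic function, even in $\theta$ (because $\cos\theta$ is even), and bounded below by a strictly positive constant. Its reciprocal $1/\mathcal{J}$ shares the same properties, so pointwise multiplication by either $\mathcal{J}$ or $1/\mathcal{J}$ is a bounded operator on the periodic Sobolev space $H^s_e(2\pi)$ (integer $s$ by the Leibniz rule; general $s\ge 0$ by duality and interpolation or directly via the Fourier characterization of the norm). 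Evenness of the multiplier ensures that parity is preserved. Consequently $\widetilde{Z}_0$ and its inverse $\widetilde{Z}_0^{-1}$ are bicontinuous on $H^s_e(2\pi)^2$ for all $s\ge 0$.

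Next, bicontinuity of the three operators follows by composition with Theorem 4.3. Namely, $\widetilde{S}_0^{\mathcal{J}}=\widetilde{S}_0\widetilde{Z}_0$ is a composition $H^s_e(2\pi)^2\xrightarrow{\widetilde{Z}_0}H^s_e(2\pi)^2\xrightarrow{\widetilde{S}_0}H^{s+1}_e(2\pi)^2$ of bicontinuous maps, with inverse $\widetilde{Z}_0^{-1}\widetilde{S}_0^{-1}$; similarly $\widetilde{N}_0^{\mathcal{J}}=\widetilde{Z}_0^{-1}\widetilde{N}_0: H^{s+1}_e(2\pi)^2\rightarrow H^s_e(2\pi)^2$ is bicontinuous with inverse $\widetilde{N}_0^{-1}\widetilde{Z}_0$; and finally $\widetilde{J}_0^{\mathcal{J}}=\widetilde{N}_0^{\mathcal{J}}\widetilde{S}_0^{\mathcal{J}}=\widetilde{Z}_0^{-1}\widetilde{J}_0\widetilde{Z}_0: H^s_e(2\pi)^2\rightarrow H^s_e(2\pi)^2$ is bicontinuous by the same reasoning.

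Finally, for the equivalence of point spectra when $s>0$, I would exploit that $\widetilde{Z}_0$ is a bicontinuous bijection of $H^s_e(2\pi)^2$ onto itself, so that the similarity identity $\widetilde{J}_0^{\mathcal{J}}=\widetilde{Z}_0^{-1}\widetilde{J}_0\widetilde{Z}_0$ shows that $f\in H^s_e(2\pi)^2\setminus\{0\}$ is an eigenvector of $\widetilde{J}_0$ with eigenvalue $\lambda$ iff $\widetilde{Z}_0^{-1}f\in H^s_e(2\pi)^2\setminus\{0\}$ is an eigenvector of $\widetilde{J}_0^{\mathcal{J}}$ with the same eigenvalue. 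Hence the point spectra coincide, and the explicit description furnished by Theorem 4.5 is inherited by $\widetilde{J}_0^{\mathcal{J}}$. The only non-automatic step is the multiplier estimate in Step 1; but since $\mathcal{J}$ is $C^\infty$, even, and uniformly positive, this is a standard fact and presents no real obstacle.
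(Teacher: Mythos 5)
Your proposal is correct and takes essentially the same approach the paper has in mind: the paper's entire proof is the one-line remark that $\widetilde{J}_0^{\mathcal{J}}=\widetilde{Z}_0^{-1}\widetilde{J}_0\widetilde{Z}_0$, and you simply fill in the implicit multiplier lemma (that $\widetilde{Z}_0$ is bicontinuous on $H^s_e(2\pi)^2$ because $\mathcal{J}(\theta)=|x'(\cos\theta)|$ is smooth, even, $2\pi$-periodic, and uniformly positive) together with the standard similarity-transform argument for the point spectrum.
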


The following lemma   builds a relationship between the function spaces $H^s_e(2\pi),H^{s+1}_e(2\pi)$ and the original function spaces $\widetilde{H}^{-1/2}(\Gamma),\widetilde{H}^{1/2}(\Gamma)$, see \cite[Corollary 4.3, Lemma 4.9]{LB15}.
\begin{lemma}
\label{link}
For any $\widetilde{\phi}\in H^s_e(2\pi)$ and $\widetilde{\psi}\in H^{s+1}_e(2\pi)$ with $s>0$. Denote $\phi(t)=\widetilde{\phi}(\arccos(t)), \psi(t)=\widetilde{\psi}(\arccos(t)), t\in[-1,1]$. For all $x=x(t)\in\Gamma$, define $\alpha(x)=\phi(t)$, $\beta(x)=\psi(t)$ and $W(x)=w(t)$. Then we have $\alpha/W\in\widetilde{H}^{-1/2}(\Gamma)$ and $\beta W\in\widetilde{H}^{1/2}(\Gamma)$.
\end{lemma}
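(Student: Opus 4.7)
The plan is to reduce everything to the scalar case on the interval $[-1,1]$, exploit the diffeomorphism $x:[-1,1]\to\G$, and then reinterpret the even/periodic expansions in the cosine basis as Chebyshev expansions on $[-1,1]$ with the weight $w(t)=\sqrt{1-t^2}$.

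First I would note that since $\mathcal{J}(t)=|x'(t)|$ is smooth and nonvanishing on $[-1,1]$, the map $x$ is a smooth diffeomorphism onto $\G$, and hence pullback by $x$ induces topological isomorphisms $\widetilde{H}^{-1/2}(\G)\simeq \widetilde{H}^{-1/2}(-1,1)$ and $\widetilde{H}^{1/2}(\G)\simeq \widetilde{H}^{1/2}(-1,1)$. After this pullback, the claim becomes: $\phi/w\in \widetilde{H}^{-1/2}(-1,1)$ whenever $\widetilde{\phi}\in H^s_e(2\pi)$, and $\psi w\in \widetilde{H}^{1/2}(-1,1)$ whenever $\widetilde{\psi}\in H^{s+1}_e(2\pi)$. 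Since the statement is componentwise, it suffices to treat scalar $\widetilde{\phi}$ and $\widetilde{\psi}$.

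Next I would expand in the cosine basis, $\widetilde{\phi}(\theta)=\tfrac{1}{2}a_0+\sum_{n\ge 1}a_n\cos(n\theta)$ and likewise for $\widetilde{\psi}$. Substituting $t=\cos\theta$ turns these into Chebyshev-$T_n$ expansions, $\phi(t)=\tfrac{1}{2}a_0+\sum_{n\ge 1}a_nT_n(t)$ and $\psi(t)=\tfrac{1}{2}b_0+\sum_{n\ge 1}b_nT_n(t)$. For the $\psi\,w$ claim, I would use the identity $\sin\theta\,\cos(n\theta)=\tfrac{1}{2}[\sin((n+1)\theta)-\sin((n-1)\theta)]$ to recognize
\ben
\psi(t)\,w(t)=\sqrt{1-t^2}\,\Bigl(\tfrac{1}{2}b_0+\sum_{n\ge 1}b_nT_n(t)\Bigr) = \sum_{n\ge 0} \tilde b_n\, \sqrt{1-t^2}\,U_n(t),
\enn
where $U_n$ are Chebyshev polynomials of the second kind and $\tilde b_n$ are explicit linear combinations of $b_{n-1},b_{n+1}$; this is a sine expansion in $\theta$ that vanishes at the endpoints and lives in the standard characterization of $\widetilde{H}^{1/2}(-1,1)$ (via the sine series, cf.\ the acoustic counterpart in \cite{LB15}). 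A short index check, using $s+1\ge 1>1/2$, yields the summability needed for $\psi\,w\in\widetilde{H}^{1/2}(-1,1)$.

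For the more delicate claim $\phi/w\in\widetilde{H}^{-1/2}(-1,1)$, I would argue by duality. Given any test function $\chi\in H^{1/2}(-1,1)$, I need to make sense of and control $\langle \phi/w,\chi\rangle$. The strategy is to push $\chi$ back to the $\theta$-variable, where $d\theta$ and $w(t)\,dt=\sin\theta\,dt/\mathcal{J}$ are compatible (the factor $1/w$ is absorbed by $dt=-\sin\theta\,d\theta$), so that formally
\ben
\int_{-1}^{1}\frac{\phi(t)}{w(t)}\chi(t)\,dt=\int_0^{\pi}\widetilde{\phi}(\theta)\,\chi(\cos\theta)\,d\theta.
\enn
Combined with the observation that $\chi\mapsto \chi\circ\cos$ maps $H^{1/2}(-1,1)$ continuously into $H^{1/2}_e(2\pi)\subset H^{s}_e(2\pi)^{\ast}$ for every $0<s\le 1/2$ (and, after the standard extrapolation for $s>1/2$, still dualizes against $H^s_e(2\pi)$), the bound follows from the definition of $\|\widetilde{\phi}\|_{H^s_e(2\pi)}$ together with the assumption $s>0$. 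This is precisely the ``half-integer shift'' between the Chebyshev cosine side and the $\widetilde{H}^{\pm 1/2}$ side encoded in \cite[Corollary 4.3, Lemma 4.9]{LB15}.

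The main obstacle is the $\widetilde{H}^{-1/2}$ side: one has to justify carefully that the formal object $\phi/w$ actually defines a distribution in $\widetilde{H}^{-1/2}(-1,1)$, not merely in $H^{-1/2}(-1,1)$, i.e.\ that it extends by zero to $\R$ without loss of regularity. The key point here is that the Chebyshev-series representation of $\phi/w$ is the image of a cosine series under a precise isomorphism with $\widetilde{H}^{-1/2}$, rather than a naive pointwise division (which would blow up near $t=\pm 1$). Everything else reduces to bookkeeping of trigonometric identities and matching the Sobolev indices $s\leftrightarrow -1/2$ and $s+1\leftrightarrow 1/2$, which is exactly the half-unit shift produced by the change of variable $t=\cos\theta$.
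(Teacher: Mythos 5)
The paper does not give a proof of this lemma at all: it simply states ``see \cite[Corollary 4.3, Lemma 4.9]{LB15}'' and moves on, treating the result as a componentwise application of the scalar results established there. Your sketch is therefore not being compared against an argument in the paper so much as against the argument the paper outsources, and in that comparison it fares reasonably well. The overall route you take -- pull back along the smooth diffeomorphism $x(\cdot)$ to reduce to $(-1,1)$, reinterpret the even $2\pi$-periodic expansion as a Chebyshev--$T_n$ expansion under $t=\cos\theta$, convert $\widetilde\psi(\theta)\sin\theta$ into a sine (Chebyshev--$U_n$) series for the $\widetilde H^{1/2}$ claim, and use duality against $H^{1/2}(-1,1)$ after absorbing the weight into the Jacobian $dt=-\sin\theta\,d\theta$ for the $\widetilde H^{-1/2}$ claim -- is exactly the mechanism behind the cited Lintner--Bruno results, including the ``half-unit shift'' $H^s_e(2\pi)\leftrightarrow H^{s-1/2}$ you invoke at the end.

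Two caveats are worth flagging so you don't overstate what you have actually proved. First, the assertions ``the sine series lives in the standard characterization of $\widetilde H^{1/2}(-1,1)$'' and ``$\chi\mapsto\chi\circ\cos$ maps $H^{1/2}(-1,1)$ continuously into $H^{1/2}_e(2\pi)$'' are precisely the nontrivial content of \cite[Corollary 4.3, Lemma 4.9]{LB15}; as written, you are assuming the conclusion of the lemmas you are citing rather than reproving them, which is acceptable (the paper does the same) but should be stated as such rather than as an ``observation''. Second, your summability check ``$s+1\ge 1>1/2$'' and the later ``the bound follows from \dots together with $s>0$'' do not actually explain where the hypothesis $s>0$ is used; as you note yourself, the genuinely delicate point is that $\phi/w$ must land in $\widetilde H^{-1/2}$ (the dual of $H^{1/2}$, equivalently extension by zero) and not merely in $H^{-1/2}$, and it is exactly this endpoint issue that forces the strict inequality in LB15. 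With those two attributions made explicit, the proposal is a faithful reconstruction of the argument the paper relies on by citation.
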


The generalized Calder\'on relations for elastic open-surface problem is summarized in the following theorem whose proof relies heavily on the singularity decompositions of the kernels of the integral operators as being stated in Appendix.
\begin{theorem}
\label{main}
For all $s>0$, the operators $\widetilde{S}_\omega^{\rm{w}}: H^s_e(2\pi)^2\rightarrow H^{s+1}_e(2\pi)^2$,  $\widetilde{N}_\omega^{\rm{w}}: H^{s+1}_e(2\pi)^2\rightarrow H^s_e(2\pi)^2$ and $\widetilde{J}_\omega^{\rm{w}}=\widetilde{N}_0\widetilde{S}_0: H^s_e(2\pi)^2\rightarrow H^s_e(2\pi)^2$ are all bicontinuous. Moreover, the following generalized Calder\'on relation
\ben
\widetilde{J}_\omega^{\rm{w}}=\widetilde{J}_0^\mathcal{J}+\widetilde{K}
\enn
holds, where $\widetilde{K}: H^s_e(2\pi)^2\rightarrow H^{s}_e(2\pi)^2$ is compact and $\widetilde{J}_0^\mathcal{J}: H^s_e(2\pi)^2\rightarrow H^{s}_e(2\pi)^2$ is bicontinuous and its point spectrum is given by $\sigma_s$, see Theorem~\ref{spectrumJ0}.
\end{theorem}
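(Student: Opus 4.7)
The strategy is to reduce the general smooth-arc, frequency-$\omega$ case to the straight-arc zero-frequency case already analyzed in Corollary~\ref{corollaryJ}. Concretely, I would split each parameterized kernel into a principal singular part that, after absorbing the Jacobian through $\widetilde{Z}_0$, matches the straight-arc zero-frequency kernel, plus a remainder inducing a compact map on the cosine-Sobolev scale. Once
\ben
\widetilde{S}_\omega^{\rm w}=\widetilde{S}_0^{\mathcal{J}}+\widetilde{K}_S,\qquad \widetilde{N}_\omega^{\rm w}=\widetilde{N}_0^{\mathcal{J}}+\widetilde{K}_N
\enn
are established with $\widetilde{K}_S:H^s_e(2\pi)^2\to H^{s+1}_e(2\pi)^2$ and $\widetilde{K}_N:H^{s+1}_e(2\pi)^2\to H^{s}_e(2\pi)^2$ compact, the Calder\'on identity $\widetilde{J}_\omega^{\rm w}=\widetilde{J}_0^{\mathcal{J}}+\widetilde{K}$ with $\widetilde{K}$ compact follows from the ideal property of compact operators under bounded composition.

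For $\widetilde{S}_\omega^{\rm w}$, I would plug the zero-frequency singularity decomposition of $\Pi_\omega$ (from the Appendix) into the parameterized kernel and use the Taylor expansion $x(\cos\theta)-x(\cos\vartheta)=(\cos\theta-\cos\vartheta)\,x'(\cos\vartheta)+O((\cos\theta-\cos\vartheta)^2)$. The logarithmic factor separates as $\ln|\cos\theta-\cos\vartheta|+\ln g(\theta,\vartheta)$ with $g$ smooth and positive on the diagonal, and on the straight arc the tensor $(x-y)(x-y)^\top/|x-y|^2$ collapses to a constant matrix while on a general arc it differs from that constant by a $C^\infty$ function of $(\theta,\vartheta)$. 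Combined with the Jacobian $\mathcal{J}(\cos\vartheta)$ already sitting inside $\widetilde{S}_\omega^{\rm w}$, the leading singular contribution reproduces $\widetilde{S}_0^{\mathcal{J}}$ exactly, while the remainder has a kernel at least one order smoother than the logarithmic singularity, hence is compact in the required setting.

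The hyper-singular step is the main obstacle. I would start from the regularized representation (\ref{N2}), which converts the non-integrable kernel of $N_\omega$ into tangential derivatives of weakly- and logarithmically-singular sub-operators; the boundary-vanishing factor $w(t)=\sqrt{1-t^2}$ kills the endpoint contributions when these derivatives are transferred onto the density $\psi^{\rm w}$, mirroring the $\widetilde{D}_0\widetilde{V}_0\widetilde{T}_0$ structure of $\widetilde{N}_0$ from Lemma~\ref{lemmaN0}. Applying the same zero-frequency singularity expansions to each of the five terms in (\ref{N2}) and tracking which of them contribute to the logarithmic principal symbol yields, after rearrangement, the coefficients $C^{(1)}_{\lambda,\mu,\widetilde\mu}$ and $C^{(2)}_{\lambda,\mu,\widetilde\mu}$ of $\widetilde{V}_0$; the remaining pieces---the $\rho\om^2$ zeroth-order term, the weakly-singular difference $\Pi_\omega-\Pi_0$, the smooth geometric perturbation away from the straight arc, and the Jacobian perturbations---all define compact operators.

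The delicate point is verifying that no spurious constant contaminates the logarithmic principal symbol after the $\pa_\tau$-transfers and the chain rule applied to the weighted parameterization, so that Assumption~\ref{assume} does guarantee non-degeneracy of the principal part. Once the decomposition is in place, bicontinuity of $\widetilde{S}_\omega^{\rm w}$ and $\widetilde{N}_\omega^{\rm w}$ follows from the Fredholm alternative: index zero by compact perturbation of bicontinuous $\widetilde{S}_0^{\mathcal{J}}$, $\widetilde{N}_0^{\mathcal{J}}$ from Corollary~\ref{corollaryJ}, and injectivity transferred via Lemma~\ref{link} to the bicontinuous operators $S_\omega$, $N_\omega$ on $\widetilde{H}^{\pm 1/2}(\Gamma)^2$. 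Bicontinuity of $\widetilde{J}_\omega^{\rm w}$ then follows as a composition of bicontinuous operators, and the Calder\'on identity exhibits its point spectrum modulo compact perturbation as the $\sigma_s$ of Theorem~\ref{spectrumJ0}.
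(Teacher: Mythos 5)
Your proposal follows essentially the same route as the paper: splitting $\widetilde{S}_\omega^{\rm w}$ and $\widetilde{N}_\omega^{\rm w}$ into the straight-arc zero-frequency operators $\widetilde{S}_0^{\mathcal{J}}$, $\widetilde{N}_0^{\mathcal{J}}$ plus compact remainders via the singularity decompositions of the Appendix, using the regularized representation (\ref{N2}) together with the boundary-vanishing weight to handle the hyper-singular part, and then invoking the Fredholm alternative with injectivity supplied by Lemma~\ref{link}. The paper phrases the perturbation as the factored form $\widetilde{S}_0^{\mathcal{J}}\bigl(I+(\widetilde{S}_0^{\mathcal{J}})^{-1}(\widetilde{S}_\omega^{\rm w}-\widetilde{S}_0^{\mathcal{J}})\bigr)$ and tracks an extra order of smoothing (the remainder gains three orders rather than one, via \cite[Lemma 4.1]{LB15}), but these are cosmetic differences and your argument is sound.
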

\begin{proof}
\begin{itemize}
\item Bicontinuity of $\widetilde{S}_\omega^{\rm{w}}$ and $\widetilde{V}_\omega^{\rm{w}}$:
\end{itemize}

For integer $s\ge 0$, in view of the singularity decomposition (\ref{dec1}) of the fundamental solution $\Pi_\omega$ we know that
\ben
\widetilde{S}_\omega^{\rm{w}}[\gamma](\theta) &=& \widetilde{S}_0^\mathcal{J}[\gamma](\theta)  +\sum_{n=2}^{s+3} A_n(\cos\theta)\int_0^\pi (\cos\theta-\cos\vartheta)^n \ln|\cos\theta-\cos\vartheta| \gamma(\vartheta)\mathcal{J}(\vartheta)d\vartheta\nonumber\\
&\quad& +\int_0^\pi \hat{A}_{s+3}(\cos\theta,\cos\vartheta) \gamma(\vartheta)\mathcal{J}(\vartheta)d\vartheta.
\enn
It follows from~\cite[Lemma 4.1]{LB15} that $\widetilde{S}_\omega^{\rm{w}}-\widetilde{S}_0^\mathcal{J}$ maps continuously from $H^s_e(2\pi)^2$ into $H^{s+3}_e(2\pi)^2$ and this result can be extended to the case of all $s\ge0$ by interpolation. Therefore, $\widetilde{S}_\omega^{\rm{w}}: H^s_e(2\pi)^2\rightarrow H^{s+1}_e(2\pi)^2$ is continuous. In view of the invertibility of $\widetilde{S}_0^\mathcal{J}$, the operator $\widetilde{S}_\omega^{\rm{w}}$ can be expressed as
\ben
\widetilde{S}_\omega^{\rm{w}}=\widetilde{S}_0^\mathcal{J} \left(I+(\widetilde{S}_0^\mathcal{J})^{-1} (\widetilde{S}_\omega^{\rm{w}}-\widetilde{S}_0^\mathcal{J})\right),
\enn
in which $(\widetilde{S}_0^\mathcal{J})^{-1} (\widetilde{S}_\omega^{\rm{w}}-\widetilde{S}_0^\mathcal{J}): H^s_e(2\pi)^2\rightarrow H^{s}_e(2\pi)^2$ is compact, i.e., the operator $(\widetilde{S}_0^\mathcal{J})^{-1}\widetilde{S}_\omega^{\rm{w}}$ is a compact perturbation of the identity operator. Then we conclude from the injectivity of $S_\omega$ together with Lemma~\ref{link} that the operator $(\widetilde{S}_0^\mathcal{J})^{-1}\widetilde{S}_\omega^{\rm{w}}$ is injective for $s>0$. Therefore, the bicontinuity of the operator $\widetilde{S}_\omega^{\rm{w}}: H^s_e(2\pi)^2\rightarrow H^{s+1}_e(2\pi)^2, s>0$ follows immediately from the Fredholm alternative. Let the operator $\widetilde{V}_\omega^{\rm{w}}$ be defined as
\ben
\widetilde{V}_\omega^{\rm{w}}[\gamma](\theta) = \int_{0}^\pi \left[ (\mu+\widetilde{\mu})^2A\Pi_\omega(x(\cos\theta),x(\cos\vartheta))A +2(\mu +\widetilde{\mu})G_{k_s}(x(\cos\theta),x(\cos\vartheta))\mathbb{I}\right] \gamma(\vartheta)d\vartheta.
\enn
In view of (\ref{dec2}), it can be proved that $\widetilde{V}_\omega^{\rm{w}}-\widetilde{V}_0: H^s_e(2\pi)^2\rightarrow H^{s+3}_e(2\pi)^2$ and $\widetilde{V}_\omega^{\rm{w}}: H^s_e(2\pi)^2\rightarrow H^{s+1}_e(2\pi)^2$ are continuous for all $s\ge 0$. Analogous to $\widetilde{S}_\omega^{\rm{w}}$, it can be proved that $\widetilde{V}_\omega^{\rm{w}}: H^s_e(2\pi)^2\rightarrow H^{s+1}_e(2\pi)^2, s>0$ is bicontinuous.

\begin{itemize}
\item Bicontinuity of $\widetilde{N}_\omega^{\rm{w}}$:
\end{itemize}

Extension of the regularized formula (\ref{N2}) to the open-surface case gives
\be
\label{Nce2}
\widetilde{N}_\omega^{\rm{w}}= \widetilde{N}_\omega^{\rm{g_1}} +\widetilde{N}_\omega^{\rm{g_2}} +\widetilde{N}_\omega^{\rm{pv}}
\en
where
\ben
\widetilde{N}_\omega^{\rm{g_1}}[\gamma](\theta)= \int_{0}^\pi \Pi_\omega^N(x(\cos\theta),x(\cos\vartheta)) \sin^2\vartheta \gamma(\vartheta)\mathcal{J}(\cos\vartheta)d\vartheta,
\enn
\ben
\widetilde{N}_\omega^{\rm{g_2}}[\gamma](\theta)= R_1\widetilde{T}_0[\gamma](\theta)+ \widetilde{Z}_0^{-1}\widetilde{D}_0R_2[\gamma](\theta),
\enn
and
\ben
\widetilde{N}_\omega^{\rm{pv}}[\gamma](\theta)= \widetilde{Z}_0^{-1}\widetilde{D}_0 \widetilde{V}_\omega^{\rm{w}}\widetilde{T}_0[\gamma](\theta).
\enn
Here,
\ben
\Pi_\omega^N(x,y)= -\rho\omega^2 (\nu_x\nu_y^\top-\nu_x^\top\nu_yI)G_{k_s}(x,y)+\widetilde{\mu}k_s^2G_{k_s}(x,y)J_{\nu_x,\nu_y}+ \rho\omega^2G_{k_p}(x,y)\nu_x\nu_y^\top,
\enn
and the operators $R_1,R_2$ are given by
\ben
R_1[\gamma](\theta)= -(\mu+\widetilde{\mu})\int_{0}^\pi \nu_{x(\cos\theta)}\nabla_{x(\cos\theta)}^\top [G_{k_s}(x(\cos\theta),x(\cos\vartheta))-G_{k_p}(x(\cos\theta),x(\cos\vartheta))]A \gamma(\vartheta)d\vartheta,
\enn
\ben
R_2[\gamma](\theta)= -(\mu+\widetilde{\mu})\int_{0}^\pi A\nabla_{x(\cos\vartheta)} [G_{k_s}(x(\cos\theta),x(\cos\vartheta))-G_{k_p}(x(\cos\theta),x(\cos\vartheta))]\nu_{x(\cos\vartheta)}^\top \gamma(\vartheta)d\vartheta.
\enn
The compactness of  $\widetilde{N}_\omega^{\rm{g_1}}: H^{s+1}_e(2\pi)^2\rightarrow H^{s}_e(2\pi)^2$ follows immediately due to its weakly-singular kernel and the compact embedding $H^{s+2}_e(2\pi)^2\hookrightarrow H^{s}_e(2\pi)^2$. Utilizing the singularity decomposition (\ref{dec3}), it can be proved that the operators $R_1,R_2: H^s_e(2\pi)^2\rightarrow H^{s+2}_e(2\pi)^2, s\ge 0$ are continuous. Note that $\widetilde{D}_0: H^{s+2}_e(2\pi)^2\rightarrow H^{s}_e(2\pi)^2$ and $\widetilde{T}_0: H^{s+1}_e(2\pi)^2\rightarrow H^{s}_e(2\pi)^2$ are continuous, it follows that $\widetilde{N}_\omega^{\rm{g_2}}: H^{s+1}_e(2\pi)^2\rightarrow H^{s}_e(2\pi)^2$ is compact. In addition, $\widetilde{N}_\omega^{\rm{pv}}: H^{s+1}_e(2\pi)^2\rightarrow H^{s}_e(2\pi)^2$ and $\widetilde{N}_\omega^{\rm{pv}}-\widetilde{N}_0^\mathcal{J}: H^{s+1}_e(2\pi)^2\rightarrow H^{s+1}_e(2\pi)^2$ are bounded in view of the expression
\ben
\widetilde{N}_\omega^{\rm{pv}}= \widetilde{N}_0^\mathcal{J}+ \widetilde{Z}_0^{-1}\widetilde{D}_0 (\widetilde{V}_\omega^{\rm{w}}-\widetilde{V}_0)\widetilde{T}_0,
\enn
and the boundedness of the operators $\widetilde{N}_0^\mathcal{J}: H^{s+1}_e(2\pi)^2\rightarrow H^{s}_e(2\pi)^2$, $\widetilde{D}_0: H^{s+2}_e(2\pi)^2\rightarrow H^{s}_e(2\pi)^2$, $\widetilde{T}_0: H^{s+1}_e(2\pi)^2\rightarrow H^{s}_e(2\pi)^2$ and $\widetilde{V}_\omega^{\rm{w}}-\widetilde{V}_0: H^s_e(2\pi)^2\rightarrow H^{s+3}_e(2\pi)^2$.

Since $\widetilde{N}_0^\mathcal{J}$ is bicontinuous, we rewrite $\widetilde{N}_\omega^{\rm{w}}$ as
\ben
\widetilde{N}_\omega^{\rm{w}}=\widetilde{N}_0^\mathcal{J}\left(I+ (\widetilde{N}_0^\mathcal{J})^{-1} (\widetilde{N}_\omega^{\rm{g_1}} +\widetilde{N}_\omega^{\rm{g_2}} +\widetilde{N}_\omega^{\rm{pv}}-\widetilde{N}_0^\mathcal{J})\right).
\enn
The compactness of $\widetilde{N}_\omega^{\rm{g_1}} +\widetilde{N}_\omega^{\rm{g_2}} +\widetilde{N}_\omega^{\rm{pv}}-\widetilde{N}_0^\mathcal{J}: H^{s+1}_e(2\pi)^2\rightarrow H^{s}_e(2\pi)^2$ and the boundedness of $(\widetilde{N}_0^\mathcal{J})^{-1}: H^{s}_e(2\pi)^2\rightarrow H^{s+1}_e(2\pi)^2$ imply  that $(\widetilde{N}_0^\mathcal{J})^{-1}\widetilde{N}_\omega^{\rm{w}}: H^{s+1}_e(2\pi)^2\rightarrow H^{s+1}_e(2\pi)^2$ is a compact perturbation of the identity operator. Therefore, bicontinuity of the operator $\widetilde{N}_\omega^{\rm{w}}: H^{s+1}_e(2\pi)^2\rightarrow H^s_e(2\pi)^2$ can be deduced by means of Fredholm alternative together with the injectivity of $N_\omega$ and Lemma~\ref{link}.

\begin{itemize}
\item Calder\'on relation:
\end{itemize}

It easily follows that
\ben
\widetilde{N}_\omega^{\rm{w}}\widetilde{S}_\omega^{\rm{w}}= \widetilde{J}_0^\mathcal{J}+ \widetilde{K},
\enn
where $\widetilde{N}_0^\mathcal{J}=\widetilde{N}_0^\mathcal{J}\widetilde{S}_0^\mathcal{J}$ is bicontinuous (see Corollary~\ref{corollaryJ}) and
\ben
\widetilde{K}= \widetilde{N}_\omega^{\rm{w}}(\widetilde{S}_\omega^{\rm{w}}-\widetilde{S}_0^\mathcal{J})+ (\widetilde{N}_\omega^{\rm{w}}-\widetilde{N}_0^\mathcal{J})\widetilde{S}_0^\mathcal{J}
\enn
maps compactly from $H^{s}_e(2\pi)^2$ into $H^{s}_e(2\pi)^2$. This completes the proof.
\end{proof}

\section{Conclusion}
\label{sec:5}

In this work, we have studied the spectral properties of the Calder\'on formulas associated with the elastic closed- and open-surface scattering problems in two dimensions. A generalized form of traction operator and the integral equation operators involving explicit edge singularities of potentials on open-surfaces are studied. It is proved that the Calder\'on formula is a compact perturbation of an identity operator and a bounded invertible operator, whose point spectrum is explicitly given, for the closed-surface case and open-surface case, respectively. The related Calder\'on formulas for three-dimensional elastic problems and the electromagnetic integral operators on open-surfaces (with more complex edge singularities) are left for future works.

\section*{Acknowledgments}
The work of LWX is supported by a Key Project of the Major Research Plan of NSFC (No. 91630205), and NSFC Grants (No.11771068, No.12071060). The work of TY is supported by an NSFC Grant (No. 12171465).

\section*{Appendix. Singularity decompositions}
\appendix
\renewcommand{\theequation}{A.\arabic{equation}}

The definition of the fundamental solution $\Pi_\omega$ gives
\ben
\Pi_\omega(x,y)&=&  \frac{i}{4\mu}H_0^{(1)}(k_s|x-y|)\mathbb{I} \nonumber\\
&\quad& -\frac{i}{4\rho\omega^2|x-y|} \left[k_sH_1^{(1)}(k_s|x-y|)-k_pH_1^{(1)}(k_p|x-y|)\right]\mathbb{I} \nonumber\\
&\quad& +\frac{i(x-y)(x-y)^\top}{4\rho\omega^2|x-y|^2} \left[k_s^2H_2^{(1)}(k_s|x-y|)-k_p^2H_2^{(1)}(k_p|x-y|)\right] ,\quad x\ne y.
\enn
From the series expansion of the bessel functions $J_n, Y_n$ (see~\cite[(10.2.2), (10.8.1)]{OLBC10}) and $H_n^{(1)}=J_n+iY_n$, it follows that
\ben
H_0^{(1)}(k|x-y|)= \sum_{m=0}^\infty \left[ \left(C_m^{(1)}+C_m^{(2)}\ln\frac{k}{2}\right) k^{2m}|x-y|^{2m} +C_m^{(2)}k^{2m}|x-y|^{2m}\ln|x-y|\right],
\enn
\ben
k_sH_1^{(1)}(k_s|x-y|)-k_pH_1^{(1)}(k_p|x-y|) = \sum_{m=0}^\infty \left( C_m^{(3)}|x-y|^{2m+1} +C_m^{(4)}|x-y|^{2m+1}\ln|x-y|\right),
\enn
and
\ben
&\quad&k_s^2H_2^{(1)}(k_s|x-y|)-k_p^2H_2^{(1)}(k_p|x-y|)\\
&=& \sum_{m=0}^\infty \left( C_m^{(5)}|x-y|^{2m+2} +C_m^{(6)}|x-y|^{2m+2}\ln|x-y|\right)-\frac{i(k_s^2-k_p^2)}{\pi},
\enn
where the constants $C_m^j, j=1,\cdots 6$ are given in~\cite[Lemma 3.1]{BXY17}. In particular,
\ben
C_0^{(2)}=\frac{2i}{\pi},\quad C_0^{(4)}=\frac{i(k_s^2-k_p^2)}{\pi}.
\enn
Note that
\ben
\frac{k_s^2-k_p^2}{4\pi\rho\omega^2}=\frac{1}{4\pi}\left(\frac{1}{\mu}-\frac{1}{\lambda+2\mu}\right)= \frac{\lambda+\mu}{4\pi\mu(\lambda+2\mu)}.
\enn
Then we conclude that
\be
\label{dec1}
\Pi_\omega(x(t),x(\iota))= \Pi_\omega^{(1)}(t,\iota)\ln|t-\iota|+C_{\lambda,\mu}^{(2)}\begin{pmatrix}
1 & 0 \\
0 & 0
\end{pmatrix}
+ \Pi_\omega^{(2)}(t,\iota),
\en
where $\Pi_\omega^{(1)}(t,\iota), \Pi_\omega^{(2)}(t,\iota)$ are smooth functions and particularly, $\Pi_\omega^{(1)}(t,\iota)$ can be expressed for all $m\in\N$ in the form
\ben
\Pi_\omega^{(1)}(t,\iota) = -C_{\lambda,\mu}^{(1)}\mathbb{I}+ \sum_{n=2}^{m+3} A_n(t)(t-\iota)^n+ \hat{A}_{m+3}(t,\iota)(t-\iota)^{m+4},
\enn
with $\hat{A}_{m+3}(t,\iota)$ being a smooth function.

Regarding to the hyper-singular integral operator $\widetilde{N}_\omega^{\rm w}$, we obtain analogously that
\be
\label{dec2}
&\quad&(\mu+\widetilde{\mu})^2A\Pi_\omega(x(t),x(\iota))A+2(\mu+\widetilde{\mu})G_{k_s}(x(t),x(\iota))\mathbb{I} \nonumber\\
&=& \frac{i[-(\mu+\widetilde{\mu})^2+2\mu(\mu+\widetilde{\mu})]}{4\mu}H_0^{(1)}(k_s|x-y|)\mathbb{I} \nonumber\\
&\quad& +\frac{i(\mu+\widetilde{\mu})^2}{4\rho\omega^2|x-y|} \left[k_sH_1^{(1)}(k_s|x-y|)-k_pH_1^{(1)}(k_p|x-y|)\right]\mathbb{I} \nonumber\\
&\quad& +\frac{i(\mu+\widetilde{\mu})^2(x-y)(x-y)^\top}{4\rho\omega^2|x-y|^2} \left[k_s^2H_2^{(1)}(k_s|x-y|)-k_p^2H_2^{(1)}(k_p|x-y|)\right] \nonumber\\
&\quad& -\frac{i(\mu+\widetilde{\mu})^2}{4\rho\omega^2} \left[k_s^2H_2^{(1)}(k_s|x-y|)-k_p^2H_2^{(1)}(k_p|x-y|)\right]\mathbb{I}
\nonumber\\
&=&  \Pi_\omega^{(3)}(t,\iota)\ln|t-\iota|+C_{\lambda,\mu,\widetilde{\mu}}^{(2)}\begin{pmatrix}
1 & 0 \\
0 & 0
\end{pmatrix}
+ \Pi_\omega^{(4)}(t,\iota),
\en
where $\Pi_\omega^{(3)}(t,\iota), \Pi_\omega^{(4)}(t,\iota)$ are smooth functions and particularly, $\Pi_\omega^{(3)}(t,\iota)$ can be expressed for all $m\in\N$ in the form
\ben
\Pi_\omega^{(3)}(t,\iota)= -C_{\lambda,\mu,\widetilde{\mu}}^{(1)}\mathbb{I} + \sum_{n=2}^{m+3} B_n(t)(t-\iota)^n+ \hat{B}_{m+3}(t,\iota)(t-\iota)^{m+4},
\enn
with $\hat{B}_{m+3}(t,\iota)$ being a smooth function. In addition,
\be
\label{dec3}
\nabla _x[G_{k_s}(x,y)-G_{k_p}(x,y)]&=& -\frac{i(x-y)[k_sH_1^{(1)}(k_s|x-y|)-k_pH_1^{(1)}(k_p|x-y|)]}{4|x-y|}\\
&=& \Pi_\omega^{(5)}(t,\iota)\ln|t-\iota|+ \Pi_\omega^{(6)}(t,\iota),
\en
where $\Pi_\omega^{(5)}(t,\iota), \Pi_\omega^{(6)}(t,\iota)$ are smooth functions and particularly, $\Pi_\omega^{(5)}(t,\iota)$ can be expressed for all $m\in\N$ in the form
\ben
\Pi_\omega^{(5)}(t,\iota)= \sum_{n=1}^{m+2} C_n(t)(t-\iota)^n+ \hat{C}_{m+2}(t,\iota)(t-\iota)^{m+3},
\enn
with $\hat{C}_{m+2}(t,\iota)$ being a smooth function.

\end{document}